\documentclass[11pt,letterpaper]{amsart}
\usepackage{amsmath,amssymb,microtype}
\usepackage[colorlinks=true,linkcolor=blue,citecolor=blue,urlcolor=blue]{hyperref}
\allowdisplaybreaks[2]
\numberwithin{equation}{section}

\newtheorem{theorem}{Theorem}[section]
\newtheorem{proposition}[theorem]{Proposition}
\newtheorem{lemma}[theorem]{Lemma}
\newtheorem{remark}[theorem]{Remark}

\newcommand{\R}{\mathbb R}
\newcommand{\bu}{\mathbf u}
\newcommand{\bv}{\mathbf v}
\newcommand{\bw}{\mathbf w}
\newcommand{\bh}{\mathbf h}
\newcommand{\cN}{\mathcal N}
\newcommand{\Sn}{\mathbb S^{n-1}}
\newcommand{\Ftwo}{\mathbf F_2}
\newcommand{\qtwo}{\mathbf q}
\newcommand{\bA}{\mathbf A}

\title[Optimal regularity for a singular cooperative system]
{Optimal regularity of solutions to a singular cooperative system}

\author{Lili Du}
\address{\textsc{Lili Du:}\newline
	Department of Mathematics, Sichuan University,
	Chengdu 610064, P.\,R.\,China.}
\email[L. Du]{dulili@scu.edu.cn}

\author{Xu Tang}
\address{\textsc{Xu Tang$^{\ast}$:}\newline
	School of Mathematical Sciences, Fudan University,
	Shanghai 200433, P.\,R.\,China.}
\email[X. Tang]{tang\_xu@fudan.edu.cn}

\author{Cong Wang}
\address{\textsc{Cong Wang:}\newline
	School of  Mathematics, Southwest Jiaotong University,
	Chengdu  610031, P.\,R.\,China.}
\email[C. Wang]{CongWang@swjtu.edu.cn}

\thanks{$^{\ast}$Corresponding author: Xu Tang (tang\_xu@fudan.edu.cn)}

\date{}

\begin{document}

\begin{abstract}
	In the pioneer work of vectorial free boundary problems \cite[Adv. Math. 280 (2015)]{ASUW15}, Andersson, Shahgholian, Uraltseva, and Weiss investigated the regularity theory of solutions and the free boundary of a singular cooperative system
	\[
	\Delta\bu=\frac{\bu}{|\bu|}\chi_{\{|\bu|>0\}}
	\quad\text{in }\Omega\subset\R^n,
	\qquad
	\bu:\Omega\longrightarrow\R^m,
	\qquad n,m\ge2.
	\]
	At the level of solutions, the boundedness of the right-hand side directly yields $W^{2,p}_{\mathrm{loc}}$ regularity for every $1<p<\infty$ by standard elliptic estimates, while the corresponding $W^{2,\infty}_{\mathrm{loc}}$ regularity question was explicitly left as an open problem on page~753 of that paper. In this paper, we give a positive answer by establishing optimal $W^{2,\infty}_{\mathrm{loc}}$ regularity through a scale-invariant interior Hessian estimate for every weak solution. Unlike the scalar case, the classical Alt--Caffarelli--Friedman monotonicity formula cannot be applied directly to the present system. Our proof instead combines a localized Newtonian potential decomposition with an explicit projection onto quadratic harmonic polynomials on the unit sphere, which isolates the symmetric trace-free quadratic part of the Hessian. A key new observation is that the resulting matrix coefficients satisfy an exact linear ordinary differential equations with constant coefficients on the logarithmic scale. The main novelty of this work lies in a so-called two-regime argument based on the competition between the affine and quadratic parts of the rescaled solution, together with a continuity argument that connects the two regimes and yields a uniform bound for the coefficient. The resulting a priori estimate provides an analytic basis for further study of the fine structure of the free boundary.
\end{abstract}

\subjclass[2020]{35R35, 35J60, 35B65}
\keywords{Vector-valued obstacle problem, Optimal regularity, Hessian bound,
Singular cooperative system}

\maketitle

\setcounter{tocdepth}{1}
\tableofcontents

\section{Introduction}

The pioneering work of Andersson, Shahgholian, Uraltseva, and Weiss
\cite{ASUW15} introduced the singular cooperative system
\begin{equation}\label{eq:system}
	\Delta\bu=\frac{\bu}{|\bu|}\chi_{\{|\bu|>0\}}
	\quad\text{in }\Omega\subset\R^n,
	\qquad
	\bu:\Omega\longrightarrow\R^m,
	\qquad n,m\ge2,
\end{equation}
where $\Omega$ is an open set, $|\cdot|$ is the Euclidean norm, and
$\chi_E$ denotes the characteristic function of a measurable set $E$.
The system may be viewed as a particular equilibrium state of a cooperative
reaction--diffusion model.  In the two-species interpretation described in
\cite{ASUW15}, the components represent the concentrations of two reactants,
and the interaction is cooperative in the sense that each reactant slows the
extinction or reaction of the other.  For the free boundary analysis, the key
additional observation is that solutions of \eqref{eq:system} arise as
minimizers of a convex energy.  More precisely, each solution $\bu$ minimizes the energy
\begin{equation*}
	\mathcal E(\bu;\Omega)=\int_{\Omega}\bigl(|D\bu|^2+2|\bu|\bigr)\,dx.
\end{equation*}
Working from
this variational perspective, the authors of \cite{ASUW15} combined monotonicity
formulas, an epiperimetric inequality, and geometric arguments to establish
quadratic growth and nondegeneracy for energy-minimizing solutions, as well as
$C^{1,\alpha}$ regularity of the regular part of the free boundary
$\partial\{|\bu|>0\}\cap\Omega$.  Although these results laid the foundations
for a geometric theory of the system, the distinct analytic problem of
establishing optimal interior regularity for the map itself remained open.

At the level of arbitrary weak solutions, the boundedness of the right-hand
side provides a natural starting point for the regularity analysis.  Indeed,
the Calder\'on--Zygmund estimates immediately yield
\begin{equation}\label{eq:finite-p}
\bu\in W^{2,p}_{\mathrm{loc}}(\Omega;\R^m)
\quad\text{for every }1<p<\infty.
\end{equation}
However, these estimates do not extend to $p=\infty$, and the corresponding
optimal regularity question was formulated in \cite[p.~753]{ASUW15} as
follows.
\begin{center}
	\begin{minipage}{0.88\textwidth}
		\centering
		\emph{``Note that in contrast to the classical (scalar) obstacle problem,
			it is an open problem whether
			$\bu\in W^{2,\infty}_{\mathrm{loc}}(\Omega;\R^m)$.''}
	\end{minipage}
\end{center}
The same question was also mentioned in the very recent work
\cite[p.~4]{DJS26}, where it was stated in the following terms.
\begin{center}
	\begin{minipage}{0.88\textwidth}
		\centering
		\emph{``unlike in the classical scalar obstacle problem, it
			remains an open problem whether solutions belong to
			$W^{2,\infty}_{\mathrm{loc}}$.''}
	\end{minipage}
\end{center}
The main goal of this paper is to resolve this open problem by establishing
optimal $W^{2,\infty}_{\mathrm{loc}}$ regularity.

We first examine the scalar case $m=1$, in which the one-dimensional target counterpart of
\eqref{eq:system} is the two-phase membrane equation
\begin{equation*}
	\Delta u=\chi_{\{u>0\}}-\chi_{\{u<0\}} \quad\text{in }\Omega,
\end{equation*}
and Uraltseva \cite{Ura01} established successfully optimal $W^{2,\infty}_{\mathrm{loc}}$ regularity by
applying the Alt--Caffarelli--Friedman monotonicity formula to the positive and
negative parts of directional derivatives  (see also
\cite{Sha03}).  This formula first appeared in the pioneering work
\cite{ACF84} on a two-phase free boundary problem and has since become one of
the most powerful tools in free boundary analysis. The free boundary of this two-phase model was subsequently
studied in \cite{SUW04,SUW07}.
Moreover, under the additional nonnegativity constraint, it
reduces to the classical obstacle problem
\begin{equation*}
	\Delta u=\chi_{\{{u>0}\}}, \qquad u\ge0 \quad\text{in }\Omega.
\end{equation*}
As emphasized by Figalli \cite{Fig18}, a central goal is to understand
both the regularity of solutions and the structure of the contact set with
the obstacle.  Concerning the first aspect, Frehse \cite{Fre72} established the optimal $W^{2,\infty}_{\mathrm{loc}}$
regularity
in 1972.  The second aspect naturally leads to the analysis of the
free boundary, which is the relative boundary of the contact set in $\Omega$.
The regularity theory for its regular part was developed in significant works
\cite{Caf77,KN77,Caf80,Caf98}, whereas the finer structure of singular free
boundary points was established in \cite{Wei99,Mon03,CSV18,FS19}.  We also
refer to the monograph \cite{PSU12} and the surveys \cite{RS19,Dan20} for
broader accounts of these developments.

However, these scalar arguments do NOT apply directly to the vectorial system
\eqref{eq:system}.  It should be noted that in the vectorial case $m\ge2$, a
directional derivative of $\bu$ is vector-valued and has no canonical
positive and negative parts,
while taking such parts componentwise does not produce closed scalar
inequalities, since differentiation of $\bu/|\bu|$ away from the  zero
set couples all components through coefficients that become singular as
$|\bu|\to0$.  Consequently, as already observed in
\cite[p.~744]{ASUW15}, the disjoint scalar pair required by the
Alt--Caffarelli--Friedman formula appears to be unavailable for
\eqref{eq:system}.  This failure of scalar phase separation marks a
fundamental distinction between the scalar and vectorial problems and
constitutes one of the principal difficulties in the analysis of the
vector-valued free boundary problem. More broadly, the present system forms part of a developing theory of
vector-valued free boundary problems, and several related models have been
investigated in \cite{FSW21,DJS22,DJS23,FK24,DJS26}.

\subsection{The main result}

Before stating the main result, we fix the notation used throughout the paper.
We write $B_r(x)=\{y\in\R^n:|y-x|<r\}$ and $B_r=B_r(0)$, while
$\Omega'\Subset\Omega$ means that $\overline{\Omega'}$ is a compact subset of
$\Omega$.  Throughout the paper, $C_n>0$ denotes a constant that depends only
on $n$ and may change from one occurrence to the next.  Unless stated
otherwise, every $L^\infty$ norm is interpreted in the essential-supremum
sense.  To formulate the weak equation concisely, we introduce the bounded
normalization map
\begin{equation}\label{eq:N}
	\cN(z):=
	\begin{cases}
		z/|z|,&z\ne0,\\
		0,&z=0,
	\end{cases}
	\qquad z\in\R^m,
\end{equation}
so that \eqref{eq:system} becomes $\Delta\bu=\cN(\bu)$, where
$\cN=(\cN_1,\ldots,\cN_m)$ and $|\cN(z)|\le1$ for every $z\in\R^m$.  Thus, we call
$\bu=(u_1,\ldots,u_m)\in W^{1,2}_{\mathrm{loc}}(\Omega;\R^m)$ a weak
solution of \eqref{eq:system} if, for every
$\alpha\in\{1,\ldots,m\}$ and every $\varphi\in C_c^\infty(\Omega)$, it holds that
\begin{equation*}
	-\int_\Omega D u_\alpha\cdot\nabla\varphi\,dx
	=\int_\Omega\cN_\alpha(\bu)\varphi\,dx.
\end{equation*}
Since the Laplacian and all derivatives are taken componentwise,
$D^2\bu$ is the ordered $m$-tuple of component Hessians, and
\begin{equation*}
	|D^2\bu|^2
	=\sum_{\alpha=1}^m\sum_{i,j=1}^n
	|\partial_{ij}u_\alpha|^2.
\end{equation*}

Our main result is the following scale-invariant interior
estimate.

\begin{theorem}[Optimal regularity estimate]\label{thm:main}
	Let $n,m\ge2$ be integers, let $\bu$ be a weak solution of
	\eqref{eq:system} in $\Omega\subset\R^n$, and suppose that
	$B_{2R}(x_0)\Subset\Omega$ for some $R>0$.  Then
	\begin{equation}\label{eq:main-estimate}
		\|D^2\bu\|_{L^\infty(B_R(x_0))}
		\le C_n\left(1+R^{-2}
		\|\bu\|_{L^\infty(B_{2R}(x_0))}\right),
	\end{equation}
	where $C_n>0$ depends only on the domain dimension $n$.  In particular,
	every weak solution of system \eqref{eq:system}  belongs to
	$W^{2,\infty}_{\mathrm{loc}}(\Omega;\R^m)$.
\end{theorem}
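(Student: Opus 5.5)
By translation and scaling it suffices to treat $x_0=0$ and $R=1$. Indeed, $\bu_R(x):=R^{-2}\bu(x_0+Rx)$ is again a weak solution of \eqref{eq:system}, because $\cN$ is $0$-homogeneous, and \eqref{eq:main-estimate} is invariant under this rescaling. Set $M:=\|\bu\|_{L^\infty(B_2)}$. By \eqref{eq:finite-p}, $\bu\in C^{1,\alpha}(B_{3/2})$ with $\|\bu\|_{C^{1,\alpha}(B_{3/2})}\le C_n(1+M)$. The task is therefore to bound $|D^2\bu(y)|$ at Lebesgue points $y\in B_1$ of $D^2\bu$. We fix such a $y$ and localize. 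Take a cutoff $\eta$ equal to $1$ on $B_{5/4}$ and write $\bu=\bw+\bh$ in $B_{5/4}$, where $\bw=\Gamma*(\eta\,\cN(\bu))$ is the Newtonian potential and $\bh$ is harmonic. The estimate $\|D^2\bh\|_{L^\infty(B_1)}\le C_n(1+M)$ is standard. Everything then reduces to $D^2\bw(y)$, which is a principal-value singular integral of the bounded density $f=\cN(\bu)$. Its only possible divergence is logarithmic, coming from the sum over dyadic shells $B_{2^{-k}}(y)\setminus B_{2^{-k-1}}(y)$ of the projection of $f$ onto degree-two spherical harmonics, $\int_{\mathbb S^{n-1}}f(y+r\theta)(\theta_i\theta_j-\delta_{ij}/n)\,d\theta$.

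The plan is to track this projection along the scales. For $r\in(0,1)$ define the trace-free symmetric matrices
\[
\bA(r):=\fint_{\partial B_r(y)}\bu(x)\Bigl(\tfrac{(x-y)\otimes(x-y)}{r^2}-\tfrac{1}{n}I\Bigr)\,d\mathcal H^{n-1},
\]
one for each component, which is the projection of $\bu$ onto quadratic harmonics. Set $t=-\log r$. Green's identity on spheres, applied to the harmonic polynomial $p(x)=(x-y)^TQ(x-y)$ with $Q$ trace-free, gives an exact linear constant-coefficient ODE of the form
\[
\bA''+n\bA'=\mathbf F(t),
\]
up to normalization in $t$. Here $\mathbf F(t)$ is the corresponding projection of $\Delta\bu=\cN(\bu)$ over $\partial B_r(y)$, and it satisfies $|\mathbf F|\le C_n$. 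The decaying mode $e^{-nt}$ is harmless. The danger is the neutral mode: since $\bA'$ is controlled, $\bA(t)$ can drift linearly in $t$, and $|\bA(t)|\lesssim t$ is exactly the $\log$ failure of $W^{2,\infty}$. Moreover $D^2\bu(y)$ is controlled by $\sup_t|\bA(t)|$ together with the trace part, which equals $\cN(\bu(y))$ and is bounded. So the goal is a uniform bound $|\bA(t)|\le C_n(1+M)$.

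This bound comes from a two-regime argument applied to the rescalings $\bu_r(x)=r^{-2}\bu(y+rx)$. Write $\bu_r=\ell_r+q_r+O(1)$, where $\ell_r$ is the affine part and $q_r$ is the quadratic part, with $q_r\approx x^T\bA x$.

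\emph{Affine regime}: $|\ell_r|\gg |\bA(t)|$ on $B_1$. Here $\bu$ is nonvanishing and close to the direction $\ell_r/|\ell_r|$ on most of the sphere. In the worst case, where $\ell_r$ is a nonzero constant vector, $\cN(\bu)$ is almost constant. If $\ell_r$ is a linear function, $\cN(\bu)$ is odd up to small errors. Either way its quadratic projection $\mathbf F$ is small, say $|\mathbf F|\le C\,|\bA|/|\ell_r|$ plus a decaying term, so $\bA$ essentially cannot grow in this regime.

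\emph{Quadratic regime}: $|\bA(t)|$ is large compared with $|\ell_r|$ and is large in absolute terms. Then $\bu_r$ is close to the homogeneous quadratic $x^T\bA x$ plus bounded errors. In this case $\cN(\bu_r)$ is close to $\cN(x^T\bA x)$, which is even and $0$-homogeneous. The key sign computation is that its quadratic projection points against $\bA$, that is, $\langle\mathbf F,\bA\rangle\le -c_n|\bA|$, so $|\bA|$ is driven down. This is the vectorial substitute for the ACF phase separation. The computation is delicate for trace-free $\bA$ with both positive and negative eigenvalues, where I would first try to prove the inequality $\langle \fint \cN(q)\,q/|x|^2,\,\cdot\,\rangle\ge c\|q\|$ by comparing with $\fint|q|$.

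Finally, a continuity argument in $t$ connects the two regimes. Let $t_*$ be the first scale at which $|\bA|$ exceeds a large threshold $K(1+M)$. Just before $t_*$, $|\bA|$ is increasing. If the quadratic regime holds there, this contradicts the sign condition. If the affine regime holds, $|\mathbf F|$ is too small for growth. The transition between regimes is controlled because $\ell_r$ evolves by the ODE for the degree-one projection, which contributes only bounded change per unit $t$. The main obstacle is the quadratic-regime sign inequality and making its error terms (the $O(1)$ remainder and the affine part) quantitatively smaller than $c_n|\bA|$ uniformly in $m$. This is where I expect the real work to lie.
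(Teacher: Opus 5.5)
Your architecture matches the paper's: a Newtonian splitting with bounded remainder, projection onto trace-free quadratics, an exact ODE on the logarithmic scale, and an affine-dominant and a quadratic-dominant regime joined by continuity. However, the steps you treat as routine are where the paper does its work, and as you state them they fail.

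\textbf{The affine regime.} You argue that $\mathbf F$ is small because $\cN(\ell)$ is either nearly constant or nearly odd. This misses the mixed case, where $e^t|\bv(0)|$ and $\|D\bv(0)\|_F$ are comparable. For example, $\ell=(\tfrac12+\omega_1)e_1$ gives $\cN(\ell)=\operatorname{sign}(\tfrac12+\omega_1)e_1$, whose quadratic coefficient is of order one. In any case, smallness scale by scale does not bound the cumulative drift over infinitely many scales. What is needed is $\int_{t_0}^\infty|\Ftwo(t)|\,dt\le C_n$, uniformly in $\bv(0)$ and $D\bv(0)$; this is Lemmas~\ref{lem:normalization-perturbation}--\ref{lem:remaining-change}. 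The paper obtains it from $0$-homogeneity: $\cN(e^{2t}a+e^tP\omega)=\cN(\beta e+\widehat P\omega)$ with $\beta=e^t|a|/\|P\|_F$ and $dt=d\beta/\beta$. The projection is then $O(\beta^{1/2})$ for $\beta\le1$ (oddness plus a band estimate), $O(1)$ for $1\le\beta\le2$ (only $\log 2$ units of $t$), and $O(\beta^{-2})$ for $\beta\ge2$. The perturbation by $E=Q_{\qtwo}+\mathbf R$ also cannot be handled by a Lipschitz bound like $|\mathbf F|\le C|\bA|/|\ell_r|$, because $\ell_r$ may vanish on $\Sn$. The paper instead bounds the measure of $\{|L|\le\delta S\}$, which gives an $L^1$ error $C(\|E\|_{L^\infty}/S)^{1/2}$. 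This is integrable because $\|E\|_{L^\infty}/S\le C(1+\tau)e^{-\tau}$ once $S(t_0)\ge|\qtwo(t_0)|+K_R$.

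\textbf{The continuity step.} With $t=-\log r$ and the $r^{-2}$ normalization, the equation is $\qtwo''-(n+2)\qtwo'=\Ftwo$. Its non-neutral mode is $e^{(n+2)t}$, which grows toward small scales; excluding it via $|\qtwo(t)|\le Ce^{2t}$ leaves $\qtwo'(t)=-\int_t^\infty e^{-(n+2)(s-t)}\Ftwo(s)\,ds$. So $\frac{d}{dt}|\qtwo|$ at $t_*$ depends on $\Ftwo$ at all smaller scales, and a sign condition at $t_*$ alone gives nothing. Lemma~\ref{lem:descent} needs the quadratic regime on a whole window $[t,t+D_n]$, which follows from $S(t+\tau)\le e^{2\tau}S(t)$ and the Lipschitz bound on $\qtwo$.

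Your sign is also reversed. In fact $\langle\qtwo,\Ftwo\rangle\ge|\qtwo|-2\kappa_n^{-1}\|W\|_{L^1(\Sn)}>0$; your own proposed inequality has this sign. The decrease comes from the minus sign in the representation above. This coercivity is only two lines: pointwise $Q_H\cdot\cN(Q_H+W)\ge|Q_H|-2|W|$, and $\int_{\Sn}|Q_H|\,d\sigma\ge\kappa_n|H|$. So it is not the main obstacle.

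Your first-crossing argument also has two holes. It ignores scales where neither regime holds, and in the affine regime a small $|\Ftwo|$ does not rule out crossing the threshold. The paper's argument runs as follows:
\begin{itemize}
\item It takes $t_0$ with $S(t_0)=|\qtwo(t_0)|+K_R$, which exists because $S$ increases to $\infty$ when $\bv(0)\ne0$.
\item It bounds all later variation of $\qtwo$ by the integrability estimate above.
\item It steps back a fixed $D_0$ to $\tau_0$, where $S(\tau_0)<\tfrac{\varepsilon_n}{4}|\qtwo(\tau_0)|$.
\item It applies Lemma~\ref{lem:descent} at a maximum point of $|\qtwo|$ on $[1,\tau_0]$. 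The quadratic regime holds there because $S$ is increasing.
\end{itemize}

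Finally, the paper uses this only where $\bu\ne0$ and handles $\{\bu=0\}$ via $D^2\bu=0$ a.e.\ (Lemma~\ref{lem:sobolev-level-set}). Your Lebesgue-point route would need two extra pieces: a separate argument when $\bu(y)=0$ and $D\bu(y)=0$ (then $S\equiv0$ and there is no crossing scale), and a justification that $\lim\qtwo$ recovers $D^2\bu(y)$.
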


\begin{remark}
	Theorem~\ref{thm:main} solves the open problem
	posed by Andersson, Shahgholian, Uraltseva, and Weiss in
	\cite[p.~753]{ASUW15} for every $n,m\ge2$.  Moreover,
	its regularity conclusion is optimal.  Indeed, the half-space solutions
	$\bu_{e,\nu}(x)=\frac12(x\cdot\nu)_+^2e$, where $e\in\R^m$,
	$\nu\in\R^n$, and $|e|=|\nu|=1$, belong to
	$C^{1,1}$, whereas their Hessians are discontinuous across the corresponding
	free boundaries.  Consequently, the $W^{2,\infty}_{\mathrm{loc}}$
	conclusion cannot, in general, be strengthened to $C^2$ regularity for
	arbitrary weak solutions of system \eqref{eq:system}.
\end{remark}

\begin{remark}
It should be noted that	the estimate \eqref{eq:main-estimate} is invariant under the natural
	quadratic rescaling
	\(
	\bu_{x_0,R}(x):=R^{-2}\bu(x_0+Rx).
	\)
	Indeed, the rescaled map satisfies the same system, its Hessian acquires no
	scaling factor, and its $L^\infty(B_2)$ norm is precisely
	$R^{-2}\|\bu\|_{L^\infty(B_{2R}(x_0))}$.  Thus the estimate for the
	rescaled map on $B_1$ is equivalent to \eqref{eq:main-estimate}.
\end{remark}

\begin{remark}
	It should also be emphasized that the constant $C_n$ in
	\eqref{eq:main-estimate} depends only on the domain dimension $n$ and is
	independent of the target dimension $m$.  The same proof is still available for 
	$m=1$, in which case it recovers the optimal estimate for the scalar
	two-phase membrane equation discussed above.  Theorem~\ref{thm:main} is
	stated for $m\ge2$ in order to emphasize the genuinely vectorial range, where
	the scalar phase-separation argument is unavailable.
\end{remark}

As discussed above, the scalar phase separation underlying the
Alt--Caffarelli--Friedman formula has no known analogue for the vector-valued
system.  Instead, we project the quadratically rescaled solution in $L^2$
onto the restrictions to the unit sphere $\mathbb S^{n-1}\subset\R^n$ of
vector-valued homogeneous harmonic polynomials of degree two.  Quadratic
harmonic projections have previously been used in scalar free boundary
problems \cite{ASW10,ALS13,IMN17}, while a related trace-free matrix projection
appears in \cite{CLW26}.  In the present system, the Laplacian determines the
trace of each component Hessian, whereas this single vector-valued projection
records all remaining trace-free quadratic coefficients simultaneously.
Accordingly, the optimal regularity problem reduces to controlling a
finite-dimensional coefficient along logarithmic scales, although identifying
this coefficient is only the starting point of the argument.

The main novelty in this paper lies in the uniform control of the vector-valued
quadratic coefficient, rather than in the projection itself.  The normalized
nonlinearity makes the projected right-hand side integrable when the
affine part dominates, whereas, when the quadratic part is
sufficiently large and dominates both the affine contribution and the bounded
remainder, it forces the coefficient to decrease.  Although neither quantitative
condition needs to hold at every scale, continuity connects the two estimates and
closes the intermediate range.  This combination replaces scalar phase
separation and yields the optimal Hessian estimate without classifying blow-ups
or requiring prior geometric information about the free boundary.

Moreover, the scope of the argument becomes clearer when its general and
equation-specific parts are separated.  The quadratic projection depends only
on the Laplacian structure and the quadratic scaling, since the equation
controls the trace of each component Hessian, whereas the projection records
its trace-free quadratic part.  By contrast, the uniform coefficient estimate
relies on the specific identities
\begin{equation*}
	\cN(\rho z)=\cN(z)\quad(\rho>0),
	\quad
	\cN(-z)=-\cN(z),
\quad\text{and}\quad
	z\cdot\cN(z)=|z|,
\end{equation*}
together with the regularity of $\cN$ away from the origin.  These properties
provide the cancellations in the affine-dominant range and the positive lower
bound required in the quadratic-dominant range, so the proof uses
substantially more than the estimate $|\cN|\le1$.

This distinction also clarifies the relation between the present result and
other questions of optimal second-order regularity.  For constraint maps, the
survey asks which regularity of the target boundary guarantees
$W^{2,\infty}_{\mathrm{loc}}$ regularity on the continuity set
\cite[Problem~7.1]{FGKS26} (see also
\cite{FKS24,FGKS24,FGKS25}).  In a different direction, Koike
\cite[Open questions~2 and~6]{Koi21} posed local $W^{2,\infty}$ questions for
scalar obstacle equations governed by Isaacs-type operators.

Nevertheless, the analogy lies in the regularity question rather than in the
analytic structure of the equations.  In particular, the argument developed
here does not apply directly to the problems described above, since the
uniform coefficient estimate relies essentially on the special identities
satisfied by $\cN$.  Accordingly, adapting this approach appears at present
to require additional estimates tailored to the structure of each equation.

\subsection{Proof strategy and organization}

For the reader's convenience, we now describe the main ideas in the proof of
Theorem~\ref{thm:main}, while indicating how the principal estimates fit
together.  
After translation and quadratic rescaling, it is enough to work
with a solution $\bv$ in $B_2$ at a point where $\bv(0)\ne0$.
On the logarithmic scale 
$$t=-\log r,$$
set
$\bA(t,\omega):=e^{2t}\bv(e^{-t}\omega)$, and let $\qtwo(t)$ denote the
coefficient obtained by projecting $\bA(t,\cdot)$ onto the vector-valued
harmonic quadratics on the sphere.  The localized Newtonian potential
decomposition separates $\bA$ into its affine part, this quadratic projection,
and a uniformly bounded remainder with zero quadratic projection
(Lemma~\ref{lem:decomposition}).  If $\Ftwo(t)$ denotes the corresponding
quadratic coefficient of $\cN(\bA(t,\cdot))$, projection of the equation in
polar coordinates gives the second-order ordinary differential system
\begin{equation*}
	\qtwo''(t)-(n+2)\qtwo'(t)=\Ftwo(t).
\end{equation*}
Noticing that $\lvert\qtwo(t)\rvert=O(e^{2t})$ and $n+2>2$, the exponential growth mode
$e^{(n+2)t}$ is excluded, and solving for $\qtwo'(t)$ therefore yields
\begin{equation*}
	\qtwo'(t)
	=-\int_t^\infty e^{-(n+2)(s-t)}\Ftwo(s)\,ds.
\end{equation*}
Although the bound $\lvert\cN\rvert\le1$ makes $\qtwo(t)$ to be Lipschitz, it does not
prevent its variation from accumulating over an unbounded interval of
logarithmic scales.  To rule out such cumulative growth, the proof must
therefore use more than the boundedness of $\cN$, and it is precisely here
that the normalized form of the nonlinearity becomes essential.

The resulting argument rests on two complementary estimates, each of which
is adapted to a different dominant configuration.  The first concerns a
scale $t_0$ at which the affine part dominates both the quadratic term and
the bounded remainder.  In this range, Lemma~\ref{lem:remaining-change}
shows that the projected right-hand side is integrable on all subsequent
scales, namely,
\begin{equation*}
	\int_{t_0}^{\infty}\lvert\Ftwo(t)\rvert\,dt\le C_n,
\end{equation*}
so that the integral representation controls the entire remaining variation
of $\qtwo(t)$.  The second estimate concerns scales at which the
quadratic coefficient is sufficiently large compared with the remainder
bound, whereas the affine contribution is small relative to that coefficient.
In this range, the estimate on the sphere gives a
uniform positive lower bound for the scalar product of $\qtwo(t)$ with $\Ftwo(t)$
at nearby later scales.  Lemma~\ref{lem:descent} combines this lower bound with
the integral representation for $\qtwo'$ and obtains the strict decrease
\begin{equation*}
	\frac{d}{dt}\lvert\qtwo(t)\rvert\le-c_{0,n}<0.
\end{equation*}
Thus, the first estimate prevents further accumulation once the affine part
has become dominant, whereas the second rules out the persistence of a large
quadratic coefficient before that transition.  The argument does
not require one of these two dominance conditions to hold at every scale,
since neither condition may apply when the affine and quadratic contributions
are comparable.  Proposition~\ref{prop:nonzero-center} bridges this
intermediate range by selecting a crossing scale at which the affine size
balances the quadratic size together with the remainder bound.  The
integrability estimate then controls all later scales, while the decrease
estimate rules out a large quadratic coefficient on a fixed interval before
the crossing.  Their combination produces uniform coefficient bounds at every
nonzero center.

Finally, the limiting quadratic coefficient is one half of the trace-free
part of the Hessian, whereas the equation controls its trace.  On the 
zero set, a two-fold application of the Sobolev level-set result shows that the
Hessian vanishes almost everywhere (Lemma~\ref{lem:sobolev-level-set}), after
which rescaling completes the proof.

The rest of the paper follows the preceding strategy.
Section~\ref{sec:matrix-expansion} introduces the matrix coefficients, proves the
Newtonian potential decomposition, and derives the scale equation, while
Section~\ref{sec:sphere-estimates} establishes the estimates on the sphere.
Section~\ref{sec:coefficient-bounds} then proves the uniform coefficient
bound, and Section~\ref{sec:final-proof} completes the proof of
Theorem~\ref{thm:main} by rescaling and treating the zero set.

\section{Matrix coefficients on the sphere and a bounded remainder}
\label{sec:matrix-expansion}

This section introduces the coefficients that encode the trace-free quadratic
information and develops the scale decomposition and differential identity
used in Sections~\ref{sec:sphere-estimates}
and~\ref{sec:coefficient-bounds}.

\subsection{Notation and the quadratic projection}

We denote the unit sphere in $\R^n$, centered at the origin, by
\begin{equation*}
	\Sn:=\partial B_1
	=\{\omega\in\R^n:|\omega|=1\}.
\end{equation*}
For a real matrix $M=(M_{ij})$, its Frobenius norm is
\begin{equation*}
	\|M\|_F=\left(\sum_{i,j}|M_{ij}|^2\right)^{1/2},
\end{equation*}
which is the Euclidean norm of the list of its entries.
We denote by $I_n$ the $n\times n$ identity matrix.

Let $\mathcal H^{n-1}$ be $(n-1)$-dimensional Hausdorff measure and define
the normalized surface measure on $\Sn$ by
\begin{equation*}
	d\sigma(\omega)
	:=\frac{d\mathcal H^{n-1}(\omega)}{\mathcal H^{n-1}(\Sn)}.
\end{equation*}
With this normalization, $\sigma(\Sn)=1$, and, unless stated otherwise, every
integral and every $L^p(\Sn;\R^m)$ norm over $\Sn$ is taken with respect to
$d\sigma$.  In particular,
$\|G\|_{L^p(\Sn)}=(\int_{\Sn}|G|^p\,d\sigma)^{1/p}$ for
$1\le p<\infty$, while $\|G\|_{L^\infty(\Sn)}$ denotes the essential supremum.

Define
\[
\mathcal X_{n,m}
:=\left\{H=(H^1,\ldots,H^m):
(H^\alpha)^T=H^\alpha,\ \operatorname{tr}H^\alpha=0
\text{ for }\alpha=1,\ldots,m\right\},
\]
where the superscript $T$ denotes transposition and $\operatorname{tr}$ denotes the sum of
the diagonal entries, so $\mathcal X_{n,m}$ equivalently consists of ordered
$m$-tuples of symmetric trace-free $n\times n$ matrices.
For $H,K\in\mathcal X_{n,m}$, we use the inner product and its
associated norm
\begin{align*}
\langle H,K\rangle
=\sum_{\alpha=1}^m\sum_{i,j=1}^n H_{ij}^\alpha K_{ij}^\alpha
=\sum_{\alpha=1}^m\operatorname{tr}(H^\alpha K^\alpha),
\end{align*}
and
\begin{align*}
|H|^2=\sum_{\alpha=1}^m\|H^\alpha\|_F^2
=\sum_{\alpha=1}^m\sum_{i,j=1}^n|H_{ij}^\alpha|^2,
\end{align*}
where the equality involving the trace uses the symmetry of the matrices,
and $\|H^\alpha\|_F$ is the Frobenius norm defined above.
For $\omega\in\Sn$, put
\begin{equation*}
	Q_H(\omega):=\bigl(\omega^TH^1\omega,\ldots,
	\omega^TH^m\omega\bigr).
\end{equation*}

Write $\omega=(\omega_1,\ldots,\omega_n)$, let $\delta_{ij}$ denote the
Kronecker symbol, and set
\begin{equation*}
	\kappa_n:=\frac{2}{n(n+2)}.
\end{equation*}
Here $\delta_{ij}$ equals $1$ when $i=j$ and $0$ otherwise.  For
$G\in L^1(\Sn;\R^m)$, define its quadratic matrix coefficient
$[G]_2\in\mathcal X_{n,m}$ componentwise by
\begin{equation}\label{eq:matrix-coefficient}
	\bigl([G]_2^\alpha\bigr)_{ij}
	=\kappa_n^{-1}\int_{\Sn}G_\alpha(\omega)
	\left(\omega_i\omega_j-\frac{\delta_{ij}}n\right)
	\,d\sigma(\omega),
	\qquad 1\le i,j\le n.
\end{equation}
The matrices in \eqref{eq:matrix-coefficient} are symmetric, while the
identity
\[
\sum_{i=1}^n\left(\omega_i^2-\frac1n\right)=0
\]
shows that they are trace free, and hence $[G]_2\in\mathcal X_{n,m}$.
When $G\in L^2(\Sn;\R^m)$, the normalization by $\kappa_n$ ensures that
$Q_{[G]_2}$ is the orthogonal projection of $G$ onto the space of
vector-valued trace-free quadratic functions.  The identities below verify
this interpretation and provide the estimates used later.

We first record two identities on the sphere
that underlie the projection formula.  Rotation invariance and
$\sum_{i=1}^n\omega_i^2=1$ imply
\begin{align}
	\int_{\Sn}\omega_i\omega_j\,d\sigma
	&=\frac{\delta_{ij}}n, \qquad i,j=1,\cdots,n
	\label{eq:sphere-coordinate-two}
\end{align}
and
\begin{align}
	\int_{\Sn}\omega_i\omega_j\omega_k\omega_l\,d\sigma
	&=\frac{\delta_{ij}\delta_{kl}+\delta_{ik}\delta_{jl}
		+\delta_{il}\delta_{jk}}{n(n+2)}, \qquad i,j,k,l=1,\cdots,n.
	\label{eq:sphere-coordinate-four}
\end{align}
For the first identity, reflection in a coordinate hyperplane makes the
integrals with $i\ne j$ vanish, while rotation invariance makes those with
$i=j$ equal, and their sum is
$\int_{\Sn}|\omega|^2\,d\sigma=1$.
To verify the second identity, every
integral in which some coordinate occurs to an odd power vanishes by the
same reflection argument.  By rotations, it remains to calculate
\[
a_n:=\int_{\Sn}\omega_1^4\,d\sigma,
	\quad\text{and}\quad
b_n:=\int_{\Sn}\omega_1^2\omega_2^2\,d\sigma.
\]
By rotational invariance, replacing $\omega_1$ by
$(\omega_1+\omega_2)/\sqrt2$ gives
\[
a_n=\frac14\int_{\Sn}(\omega_1+\omega_2)^4\,d\sigma
=\frac14(2a_n+6b_n),
\]
and hence $a_n=3b_n$.  On the other hand, expanding
$(\sum_{i=1}^n\omega_i^2)^2=1$ and integrating, we obtain
\[
1=na_n+n(n-1)b_n=n(n+2)b_n.
\]
Thus $b_n=1/[n(n+2)]$ and $a_n=3/[n(n+2)]$, which proves the stated formulas \eqref{eq:sphere-coordinate-two} and \eqref{eq:sphere-coordinate-four}.

Consequently, for $H,K\in\mathcal X_{n,m}$, one has
\begin{equation}\label{eq:Q-inner-product}
	\int_{\Sn}Q_H\cdot Q_K\,d\sigma
	=\kappa_n\langle H,K\rangle,
	\quad\text{and}\quad \|Q_H\|_{L^\infty(\Sn)}\le |H|.
\end{equation}
Expanding the integral and applying \eqref{eq:sphere-coordinate-four}, we find
for each component that
\[
\int_{\Sn}(\omega^TH^\alpha\omega)
(\omega^TK^\alpha\omega)\,d\sigma
=\frac{\operatorname{tr}H^\alpha\operatorname{tr}K^\alpha
	+2\operatorname{tr}(H^\alpha K^\alpha)}{n(n+2)},
\]
where the product
$\operatorname{tr}H^\alpha\operatorname{tr}K^\alpha$ vanishes in view of
$\operatorname{tr}H^\alpha=\operatorname{tr}K^\alpha=0$.  The pointwise
bound follows from the Cauchy--Schwarz inequality,
\[
|\omega^TH^\alpha\omega|
\le\left(\sum_{i,j}|H_{ij}^\alpha|^2\right)^{1/2}
\left(\sum_{i,j}\omega_i^2\omega_j^2\right)^{1/2}
=\|H^\alpha\|_F,
\]
where the last equality uses
$\sum_{i,j}\omega_i^2\omega_j^2=|\omega|^4=1$.
Squaring the componentwise estimate and summing over the target components
yields the corresponding vector bound, with a constant that remains
independent of $m$, namely
\[
|Q_H(\omega)|^2
=\sum_{\alpha=1}^m|\omega^TH^\alpha\omega|^2
\le\sum_{\alpha=1}^m\|H^\alpha\|_F^2
=|H|^2.
\]

Combining \eqref{eq:matrix-coefficient} with
$\operatorname{tr}H^\alpha=0$, we obtain, for every
$H\in\mathcal X_{n,m}$,
\begin{align*}
	\langle[G]_2,H\rangle
	&=\kappa_n^{-1}\sum_{\alpha=1}^m\sum_{i,j=1}^n
	H_{ij}^\alpha\int_{\Sn}G_\alpha(\omega)
	\left(\omega_i\omega_j-\frac{\delta_{ij}}n\right)d\sigma(\omega)\\
	&=\kappa_n^{-1}\int_{\Sn}\sum_{\alpha=1}^mG_\alpha(\omega)
	\left(\omega^TH^\alpha\omega-\frac{\operatorname{tr}H^\alpha}{n}\right)
	d\sigma(\omega)\\
	&=\kappa_n^{-1}\int_{\Sn}G(\omega)\cdot Q_H(\omega)\,d\sigma(\omega),
\end{align*}
where $[G]_2$ denotes the quadratic matrix coefficient defined by \eqref{eq:matrix-coefficient}.
Now take $G=Q_K$, where $K\in\mathcal X_{n,m}$, and use
\eqref{eq:Q-inner-product}.  Then
\[
\langle[Q_K]_2,H\rangle
=\kappa_n^{-1}\int_{\Sn}Q_K\cdot Q_H\,d\sigma
=\langle K,H\rangle
\qquad(H\in\mathcal X_{n,m}).
\]
Thus $[Q_K]_2-K$ is orthogonal to every element of $\mathcal X_{n,m}$.
Taking $H=[Q_K]_2-K$ in the last identity shows that
$|[Q_K]_2-K|^2=0$, and hence $[Q_K]_2=K$.  We have proved
\begin{equation}\label{eq:coefficient-identities}
	\langle[G]_2,H\rangle
	=\kappa_n^{-1}\int_{\Sn}G\cdot Q_H\,d\sigma,
		\quad\text{and}\quad [Q_H]_2=H.
\end{equation}

If $G\in L^2(\Sn;\R^m)$, these identities also show that
$Q_{[G]_2}$ is the orthogonal projection of $G$ in $L^2(\Sn;\R^m)$ onto the
space of trace-free quadratic functions.  Indeed, for every
$H\in\mathcal X_{n,m}$,
\begin{align*}
	\int_{\Sn}\bigl(G-Q_{[G]_2}\bigr)\cdot Q_H\,d\sigma
	=\kappa_n\langle[G]_2,H\rangle
	-\kappa_n\langle[G]_2,H\rangle=0.
\end{align*}
This proves the orthogonal-projection interpretation stated above.

Recalling \eqref{eq:sphere-coordinate-two}, constants have zero coefficient, while
the coefficient of a linear function $B\omega$, with
$B\in\R^{m\times n}$, also vanishes by the oddness of its product with
$\omega_i\omega_j-\delta_{ij}/n$.  Together with
$[Q_H]_2=H$, these
facts show that $[\,\cdot\,]_2$ annihilates constant and linear functions and
recovers $H$ from every trace-free quadratic form $Q_H$.

To make the relation between this projection and the Hessian explicit, let
$M^1,\ldots,M^m$ be symmetric $n\times n$ matrices and suppose that
\[
G_\alpha(\omega)=\frac12\omega^TM^\alpha\omega,
\quad\text{for}\quad \alpha=1,\ldots,m.
\]
Since $|\omega|=1$, we have
\[
G_\alpha(\omega)
=\frac12\omega^T\left(M^\alpha-
\frac{\operatorname{tr}M^\alpha}{n}I_n\right)\omega
+\frac{\operatorname{tr}M^\alpha}{2n}.
\]
The last term is constant and has zero quadratic matrix coefficient, while
the matrix in parentheses is symmetric and has zero trace.  The
identity in \eqref{eq:coefficient-identities} therefore reads, componentwise,
\begin{equation}\label{eq:quadratic-Hessian-coefficient}
	[G]_2^\alpha
	=\frac12\left(M^\alpha-
	\frac{\operatorname{tr}M^\alpha}{n}I_n\right),
\quad\text{for}\quad \alpha=1,\ldots,m.
\end{equation}
Consequently, the equation supplies the trace of each component Hessian,
whereas twice the corresponding quadratic coefficient is precisely the
trace-free part.

The coefficient identities also yield the bounds
\begin{equation}\label{eq:coefficient-bounds-basic}
	|[G]_2|\le\kappa_n^{-1}\|G\|_{L^1(\Sn)},
		\quad\text{and}\quad
	|[G]_2|\le\kappa_n^{-1/2}\|G\|_{L^2(\Sn)}.
\end{equation}
Here the first inequality holds for $G\in L^1(\Sn;\R^m)$, while the second one 
holds for $G\in L^2(\Sn;\R^m)$.  For the first inequality, if $|H|=1$, then
\[
|\langle[G]_2,H\rangle|
\le\kappa_n^{-1}\int_{\Sn}|G(\omega)|\,|Q_H(\omega)|
\,d\sigma(\omega)
\le\kappa_n^{-1}\|G\|_{L^1(\Sn)}.
\]
Taking the supremum over such $H$ yields the first bound.  For the second
bound, setting $K=H$ in \eqref{eq:Q-inner-product}, we have
\[
\|Q_H\|_{L^2(\Sn)}^2
=\int_{\Sn}|Q_H|^2\,d\sigma
=\kappa_n|H|^2.
\]
Consequently, $\|Q_H\|_{L^2(\Sn)}=\kappa_n^{1/2}$ when $|H|=1$.
The Cauchy--Schwarz inequality now implies
\[
|\langle[G]_2,H\rangle|
\le\kappa_n^{-1}\|G\|_{L^2(\Sn)}\|Q_H\|_{L^2(\Sn)}
=\kappa_n^{-1/2}\|G\|_{L^2(\Sn)}.
\]
Hence, taking the supremum over $|H|=1$ yields the second bound.

\subsection{Scale profiles and a bounded remainder}

We now apply the preceding quadratic projection to the rescaled traces of a
solution.  The purpose of this subsection is to separate the affine and
trace-free quadratic parts from a remainder that remains uniformly bounded
over all small scales, derive the scale equation for the quadratic
coefficient, and identify its limit with the trace-free Hessian at the
center.

For a weak solution $\bv$ of \eqref{eq:system} in $B_2$, we introduce the
$\R^m$-valued scale profile on $[0,\infty)\times\Sn$ by
\begin{equation}\label{eq:A-definition}
	\bA(t,\omega)=e^{2t}\bv(e^{-t}\omega).
\end{equation}
Using the quadratic coefficient defined above, we then set
\begin{equation}\label{eq:q-F2-definitions}
	\qtwo(t)=[\bA(t,\cdot)]_2，
	\qquad\text{and}\qquad
	\Ftwo(t)=[\cN(\bA(t,\cdot))]_2.
\end{equation}

\begin{lemma}\label{lem:decomposition}
	Let $\bv$ be a weak solution of \eqref{eq:system} in $B_2\subset\R^n$ such
	that $\|\bv\|_{L^\infty(B_2)}\le M$ and $\bv(0)\ne0$.   Then, for
	$t\ge1$,
	\begin{equation}\label{eq:decomposition}
		\bA(t,\omega)=e^{2t}\bv(0)+e^tD\bv(0)\omega
		+Q_{\qtwo(t)}(\omega)+\mathbf R(t,\omega),
	\end{equation}
	where
	\begin{equation}\label{eq:R-properties}
		[\mathbf R(t,\cdot)]_2=0,
		\qquad
		\sup_{t\ge1}\|\mathbf R(t,\cdot)\|_{L^\infty(\Sn)}
		\le C_n(M+1).
	\end{equation}
	Moreover,
	\begin{equation}\label{eq:qprime}
		\qtwo'(t)=-\int_t^\infty e^{-(n+2)(s-t)}\Ftwo(s)\,ds,
		\quad\text{and}\quad
		|\qtwo'(t)|\le\frac{\kappa_n^{-1/2}}{n+2}.
	\end{equation}
	The matrix coefficient $\qtwo(t)$ has a finite limit $\qtwo_\infty=(\qtwo_\infty^1,\cdots,\qtwo_\infty^m)$ as
	$t\to\infty$, and its components satisfy
	\begin{equation}\label{eq:q-Hessian-relation-1}
		\qtwo_\infty^\alpha
		=\frac12\left(D^2v_\alpha(0)
		-\frac{\Delta v_\alpha(0)}nI_n\right),
	\quad\text{for}\quad \alpha=1,\ldots,m,
	\end{equation}
	and
	\begin{equation}\label{eq:q-Hessian-relation-2}
		|D^2\bv(0)|^2=4|\qtwo_\infty|^2+\frac1n.
	\end{equation}
\end{lemma}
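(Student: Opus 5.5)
The plan has three parts: get smoothness at the center from $\bv(0)\ne0$, build the decomposition from a harmonic/Newtonian splitting, and derive the ODE for $\qtwo$ by projecting the equation in logarithmic polar coordinates. First, $\bv(0)\ne0$ together with \eqref{eq:finite-p} gives $\bv\in C^{1,\alpha}$, so $|\bv|>0$ on some ball $B_\rho$. There $\Delta\bv=\bv/|\bv|$ is a smooth function of $\bv$, and bootstrapping shows that $\bv$ is smooth near $0$. Hence $D^2\bv(0)$ is defined pointwise, and Taylor's formula gives
\[
\bA(t,\omega)=e^{2t}\bv(0)+e^tD\bv(0)\omega+\tfrac12\bigl(\omega^TD^2v_\alpha(0)\omega\bigr)_\alpha+O(e^{-t}).
\]
Applying $[\cdot]_2$, which annihilates constants and linear functions, and using \eqref{eq:quadratic-Hessian-coefficient} with \eqref{eq:coefficient-bounds-basic}, we get $\qtwo(t)\to\qtwo_\infty$ with \eqref{eq:q-Hessian-relation-1}. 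For \eqref{eq:q-Hessian-relation-2}, split each Hessian orthogonally into its trace-free part and $\frac{\Delta v_\alpha(0)}{n}I_n$. This gives
\[
|D^2\bv(0)|^2=4|\qtwo_\infty|^2+\tfrac1n\sum_\alpha(\Delta v_\alpha(0))^2,
\]
and $\sum_\alpha(\Delta v_\alpha(0))^2=|\cN(\bv(0))|^2=1$.

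For \eqref{eq:decomposition}, I would simply define $\mathbf R$ as $\bA$ minus the three explicit terms. Then $[\mathbf R(t,\cdot)]_2=0$ holds automatically by \eqref{eq:coefficient-identities} and the annihilation of affine functions, and the whole issue is the uniform $L^\infty$ bound. Write $\bv=\bw+\bh$ on $B_1$, where $\bw$ is the Newtonian potential of $\chi_{B_1}\cN(\bv)$ and $\bh$ is harmonic.
\begin{itemize}
\item Since $|\bw|\le C_n$ on $B_1$, we have $|\bh|\le M+C_n$. Interior derivative estimates then give a cubic Taylor remainder for $\bh$ of size $C_n(M+1)|x|^3$, which is $O((M+1)e^{-t})$ after rescaling. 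The quadratic Taylor term of $\bh$ is a fixed quadratic form.
\item For $\bw$ at $|x|=r=e^{-t}$, split the kernel into $|y|<2r$ and $2r<|y|<1$. The near part contributes $O(r^2)$ to $\bw(x)$, to $\bw(0)$, and to $D\bw(0)x$. For the far part, expand $\Gamma(x-y)$ to second order in $x$; the third-order error integrates to $C_nr^3\int_{|y|>2r}|y|^{-n-1}dy\le C_nr^2$.
\item The second-order far term is $\frac12x^TP(r)x$, with $P(r)=\int_{2r<|y|<1}D^2\Gamma(y)\cN(\bv(y))\,dy$ trace-free because $\Delta\Gamma=0$ away from $0$. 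This term may be logarithmically large, but it is a trace-free quadratic form.
\end{itemize}
Thus $\bA=\text{affine}+Q_{P(r)/2+D^2\bh(0)/2}+\text{(bounded)}$, up to a harmless constant from the trace part of $D^2\bh(0)$. Projecting with \eqref{eq:coefficient-bounds-basic} shows that $\qtwo(t)$ differs from that coefficient by $C_n(M+1)$, and therefore $\|\mathbf R\|_{L^\infty}\le C_n(M+1)$. I expect this uniform Newtonian-potential estimate to be the main obstacle: the logarithmic growth of $D^2\bw$ must be shown to sit entirely in the trace-free quadratic mode, with constants uniform for $t\ge1$.

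For \eqref{eq:qprime}, write $\Delta$ in polar coordinates with $r=e^{-t}$. The scale invariance $\cN(\rho z)=\cN(z)$ gives
\[
\partial_t^2\bA-(n+2)\partial_t\bA+2n\bA+\Delta_{\Sn}\bA=\cN(\bA),
\]
valid in the weak sense since $\bv\in W^{2,p}$. Testing against the degree-two spherical harmonics $\omega_i\omega_j-\delta_{ij}/n$, which satisfy $\Delta_{\Sn}=-2n$ on them, yields $\qtwo''-(n+2)\qtwo'=\Ftwo$. Hence $(e^{-(n+2)t}\qtwo')'=e^{-(n+2)t}\Ftwo$. Since $|\qtwo'(t)|\le C(M+1)e^{2t}$ (from $\partial_t\bA=2\bA-r\partial_r\bv$ rescaled) and $n+2>2$, the boundary term at infinity vanishes, and integrating on $[t,\infty)$ gives the formula. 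Finally, $|\Ftwo|\le\kappa_n^{-1/2}\|\cN(\bA)\|_{L^2(\Sn)}\le\kappa_n^{-1/2}$ by \eqref{eq:coefficient-bounds-basic}, which gives the stated bound on $|\qtwo'|$.
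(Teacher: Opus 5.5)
Your proposal is correct and follows essentially the paper's argument. The shared steps are: the Newtonian/harmonic splitting with the kernel cut at $|y|=2r$; the logarithmically growing far-field second-order term absorbed into the trace-free mode, with the bounded error then projected to give $\mathbf R$; the projected equation $\qtwo''-(n+2)\qtwo'=\Ftwo$; and identification of $\qtwo_\infty$ via regularity at the nonzero center. You exclude the growing mode $e^{(n+2)t}$ with a bound on $\qtwo'$ rather than on $\qtwo$, which is equally valid.

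One correction: for $n=2$ the near-part contributions to $\bw(x)$ and $\bw(0)$ are separately only $O(r^2\log(1/r))$, not $O(r^2)$. You must therefore estimate the combined kernel $\Gamma_2(x-y)-\Gamma_2(-y)-D\Gamma_2(-y)\cdot x$ after substituting $y=rz$, where the $\log r$ terms cancel, as the paper does.
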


\begin{remark}
	The identities
	\eqref{eq:q-Hessian-relation-1} and \eqref{eq:q-Hessian-relation-2} show that
	controlling $\qtwo_\infty$ suffices to control the Hessian, since the
	equation determines the trace of each component Hessian, whereas
	$2\qtwo_\infty^\alpha$ is its trace-free part.  Moreover, the constant
	and linear terms in \eqref{eq:decomposition} have zero matrix coefficient,
	$[\mathbf R(t,\cdot)]_2=0$, and $[Q_{\qtwo(t)}]_2=\qtwo(t)$, so
	$Q_{\qtwo(t)}$ is the only term in the decomposition that carries the
	quadratic information required to recover the Hessian.
\end{remark}

\begin{proof}
	For clarity, we divide the proof into six steps, which fall naturally into
	three main stages.  In Steps~1--3 below, we decompose $\bv$ into a localized
	Newtonian potential and a harmonic function, and then use their Taylor
	expansions to separate the trace-free quadratic term from a uniformly bounded
	remainder.  Steps~4 and~5 pass to logarithmic variables, project the
	polar-coordinate equation onto the vector-valued harmonic quadratics, and
	derive both the ordinary differential equations for $\qtwo(t)$ and its integral
	representation.  Finally, Step~6 uses the assumption $\bv(0)\ne0$ and interior
	Schauder estimates near the origin to identify the limiting coefficient with
	one half of the trace-free part of $D^2\bv(0)$.
	
	\emph{Step 1. The local Newtonian potential and the harmonic part.}
	Set $\mathbf f:=\cN(\bv)=(f_1,\ldots,f_m)$, so that
	$|\mathbf f|\le1$ and $\Delta\bv=\mathbf f$.  Choose a function 
	$\psi\in C_c^\infty(B_{7/4})$ with $0\le\psi\le1$ and
	$\psi=1$ on $B_{3/2}$.  Let $\Gamma_n$ be the fundamental solution of the
	Laplacian in $\R^n$, normalized by $\Delta\Gamma_n=\delta_0$, where
	$\delta_0$ denotes the Dirac mass at the origin, and set
	\[
	\bw(x)=\int_{\R^n}\Gamma_n(x-y)\psi(y)\mathbf f(y)\,dy,
	\qquad \text{and}\qquad\bh(x)=\bv(x)-\bw(x),
	\]
	where $\psi\mathbf f$ is extended by zero outside $B_2$.
	The kernel $\Gamma_n$ is a constant multiple of $\log|x|$ when $n=2$ and
	of $|x|^{2-n}$ when $n\ge3$.  If $x\in B_{3/2}$ and
	$y\in\operatorname{supp}\psi\subset B_{7/4}$, then $x-y\in B_{13/4}$.
	Since $|\psi\mathbf f|\le1$, the change of variables $z=x-y$ yields
	\[
	|\bw(x)|
	\le\int_{B_{7/4}}|\Gamma_n(x-y)|\,dy
	\le\int_{B_{13/4}}|\Gamma_n(z)|\,dz.
	\]
	For $n\ge3$, we have
	\[
	\int_{B_{13/4}}|\Gamma_n(z)|\,dz
	\le C_n\int_0^{13/4}\rho^{2-n}\rho^{n-1}\,d\rho
	=C_n\int_0^{13/4}\rho\,d\rho<\infty.
	\]
	For $n=2$, the corresponding radial integral is
	$C_2\int_0^{13/4}\rho|\log\rho|\,d\rho<\infty$.  Therefore
	\[
	\|\bw\|_{L^\infty(B_{3/2})}\le C_n.
	\]
	Since $\Delta\bw=\psi\mathbf f=\mathbf f$ in $B_{3/2}$,
	$\bh=\bv-\bw$ is harmonic there.  Interior estimates for harmonic functions
	therefore yield
	\begin{equation}\label{eq:h-third-bound}
		\|D^3\bh\|_{L^\infty(B_{3/4})}
		\le C_n(M+1).
	\end{equation}

	\emph{Step 2. Estimates for the second-order Taylor remainder of the
		Newtonian kernel.}
	Fix $0<r\le e^{-1}$ and $\omega\in\Sn$, and observe that
	$x-y\in B_{9/4}$ whenever $x\in B_{1/2}$ and
	$y\in\operatorname{supp}\psi\subset B_{7/4}$.  Since
	$D\Gamma_n\in L^1_{\mathrm{loc}}(\R^n)$, the truncated kernel
	$D\Gamma_n\chi_{B_{9/4}}$ belongs to $L^1(\R^n)$.  Continuity of
	translations in $L^1$, together with $|\psi\mathbf f|\le1$, therefore shows that
	\[
	D\bw(x)=\int_{\R^n}D\Gamma_n(x-y)\psi(y)\mathbf f(y)\,dy
	\]
	has a continuous representative in $B_{1/2}$.  Subtracting the value and
	linear part of $\bw$ at the origin then gives
	\begin{align*}
		&\bw(r\omega)-\bw(0)-rD\bw(0)\omega\\
		={}&\int_{\R^n}
		\bigl(\Gamma_n(r\omega-y)-\Gamma_n(-y)
		-rD\Gamma_n(-y)\cdot\omega\bigr)\psi(y)\mathbf f(y)\,dy.
	\end{align*}
	We divide the integral into two parts $|y|<2r$ and $|y|\ge2r$, respectively.
	
	To estimate the integral over $|y|<2r$, set $y=rz$ and note that, when
	$n\ge3$, the homogeneity of the fundamental solution gives
	$\Gamma_n(rx)=r^{2-n}\Gamma_n(x)$ and
	$D\Gamma_n(rx)=r^{1-n}D\Gamma_n(x)$ for $r>0$.  Therefore
	\begin{align*}
		&\Gamma_n(r\omega-rz)-\Gamma_n(-rz)
		-rD\Gamma_n(-rz)\cdot\omega\\
		={}&r^{2-n}\bigl(\Gamma_n(\omega-z)-\Gamma_n(-z)
		-D\Gamma_n(-z)\cdot\omega\bigr).
	\end{align*}
	If $n=2$, then
	\[
	\Gamma_2(x)=\frac1{2\pi}\log|x|,
	\quad\text{and}\quad D\Gamma_2(x)=\frac1{2\pi}\frac{x}{|x|^2}.
	\]
	Consequently,
	\begin{align*}
		&\Gamma_2(r\omega-rz)-\Gamma_2(-rz)
		-rD\Gamma_2(-rz)\cdot\omega\\
		={}&\frac1{2\pi}\bigl(\log r+\log|\omega-z|
		-\log r-\log|z|
		+\tfrac{z\cdot\omega}{|z|^2}\bigr)\\
		={}&\frac1{2\pi}\left(\log|\omega-z|-\log|z|
		+\frac{z\cdot\omega}{|z|^2}\right).
	\end{align*}
	Since $dy=r^n\,dz$, the homogeneity factor and the Jacobian combine to
	produce $r^2$ in both cases, while $|\psi\mathbf f|\le1$ shows that inserting
	the factor $\psi(y)\mathbf f(y)$ preserves the same estimate.  Hence
	\begin{align}
		\int_{|y|<2r}
		\left|\Gamma_n(r\omega-y)-\Gamma_n(-y)
		-rD\Gamma_n(-y)\cdot\omega\right|\,dy
		\le C_n r^2.\label{eq:near-integral}
	\end{align}
	The remaining integral over $|z|<2$ is uniformly bounded with respect to
	$\omega$, as follows from the estimates below.  When $n\ge3$, translation
	of the ball and polar coordinates yield
	\begin{align*}
		\int_{|z|<2}|\Gamma_n(\omega-z)|\,dz
		&\le C_n\int_{|\zeta|<3}|\zeta|^{2-n}\,d\zeta
		=C_n\int_0^3\rho\,d\rho<\infty,
			\end{align*}
and
\begin{align*}
		\int_{|z|<2}|\Gamma_n(-z)|\,dz
		&\le C_n\int_0^2\rho\,d\rho<\infty.
	\end{align*}
	For $n=2$, the corresponding radial integral 
	$C_2\int_0^3\rho|\log\rho|\,d\rho$ is finite.  In every dimension under
	consideration,
	\[
	\int_{|z|<2}|D\Gamma_n(-z)|\,dz
	\le C_n\int_0^2\rho^{1-n}\rho^{n-1}\,d\rho
	=2C_n<\infty.
	\]
	These estimates are uniform in $\omega\in\Sn$ and complete the verification
	of \eqref{eq:near-integral}.
	
	For $|y|\ge2r$, Taylor's formula at $-y$ yields
	\begin{equation*}
		\Gamma_n(r\omega-y)=\Gamma_n(-y)+rD\Gamma_n(-y)\cdot\omega
		+\frac{r^2}{2}D^2\Gamma_n(-y)[\omega,\omega]
		+\mathcal R_n(r,y,\omega),
	\end{equation*}
	where
	\[
	D^2\Gamma_n(-y)[\omega,\omega]
	=\sum_{i,j=1}^n\partial_{ij}\Gamma_n(-y)\omega_i\omega_j
	\]
	is the quadratic expression determined by the Hessian matrix.
	The integral form of the Taylor remainder is
	\[
	\mathcal R_n(r,y,\omega)
	=\frac{r^3}{2}\int_0^1(1-s)^2
	D^3\Gamma_n(-y+sr\omega)[\omega,\omega,\omega]\,ds,
	\]
	where the trilinear expression is defined by
\begin{equation}
D^3\Gamma_n(\xi)[\omega,\omega,\omega]
:=\sum_{i,j,k=1}^n
\partial_{ijk}\Gamma_n(\xi)\omega_i\omega_j\omega_k.
\end{equation}
	Since $|D^3\Gamma_n(\xi)|\le C_n|\xi|^{-n-1}$ and
	$|-y+sr\omega|\ge|y|/2$ for $0\le s\le1$, the integral remainder satisfies
	\[
	|\mathcal R_n(r,y,\omega)|\le C_n r^3|y|^{-n-1}.
	\]
	Consequently,
	\begin{equation}\label{eq:far-remainder}
		\int_{{\{2r\le|y|<7/4\}}}
		|\mathcal R_n(r,y,\omega)|\,dy
		\le C_n r^3\int_{2r}^{7/4}\rho^{-2}\,d\rho
		=C_n r^3\left(\frac1{2r}-\frac4{7}\right)
		\le C_n r^2.
	\end{equation}
	For each $\alpha=1,\ldots,m$, the identity
	$\Delta\Gamma_n(-y)=0$ for $y\ne0$ shows that the matrix
	\[
	\frac12\int_{\{2r\le|y|<7/4\}}
	D^2\Gamma_n(-y)\psi(y)f_\alpha(y)\,dy
	\]
	is symmetric and trace-free, so the ordered tuple of these $m$ matrices
	belongs to $\mathcal X_{n,m}$.
	
	\emph{Step 3. Separation of the quadratic matrix
		coefficient and construction of the bounded remainder.}
	Taylor's formula for the harmonic part and \eqref{eq:h-third-bound} yield
	\[
	\bh(r\omega)-\bh(0)-rD\bh(0)\omega
	=\frac{r^2}{2}D^2\bh(0)[\omega,\omega]
	+\frac{r^3}{2}\int_0^1(1-s)^2
	D^3\bh(sr\omega)[\omega,\omega,\omega]\,ds.
	\]
	In this vector identity, the $\alpha$-th component of
	$D^2\bh(0)[\omega,\omega]$ is
	$$\sum_{i,j=1}^n\partial_{ij}h_\alpha(0)\omega_i\omega_j.$$
	For each $\alpha=1,\ldots,m$, define the $\alpha$-th matrix in
	$\widetilde{\mathbf q}(r)=(\widetilde q^1(r),\ldots,
	\widetilde q^m(r))$ by
	\begin{equation*}
		\widetilde q^\alpha(r)
		=\frac12D^2h_\alpha(0)
		+\frac12\int_{\{2r\le|y|<7/4\}}
		D^2\Gamma_n(-y)\psi(y)f_\alpha(y)\,dy,
	\end{equation*}
	where $h_\alpha$ denotes the $\alpha$-th component of $\bh$.
	Both terms are symmetric matrices, while the identities
	$\Delta h_\alpha(0)=0$ and $\Delta\Gamma_n(-y)=0$ for $y\ne0$ show that
	their traces vanish.  Consequently,
	\(\widetilde{\mathbf q}(r)\in\mathcal X_{n,m}\), and we define
	\(E_r(\omega)\) through the identity
	\begin{equation}\label{eq:preliminary-expansion}
		\frac{\bv(r\omega)-\bv(0)-rD\bv(0)\omega}{r^2}
		=Q_{\widetilde{\mathbf q}(r)}(\omega)+E_r(\omega).
	\end{equation}
	Moreover, the remainder  $E_r$ has the explicit form
	\begin{align*}
		E_r(\omega)
		={}&\frac1{r^2}\int_{|y|<2r}
		\bigl(\Gamma_n(r\omega-y)-\Gamma_n(-y)
		-rD\Gamma_n(-y)\cdot\omega\bigr)\psi(y)\mathbf f(y)\,dy\\
		&+\frac1{r^2}\int_{\{2r\le|y|<7/4\}}
		\mathcal R_n(r,y,\omega)\psi(y)\mathbf f(y)\,dy\\
		&+\frac r2\int_0^1(1-s)^2
		D^3\bh(sr\omega)[\omega,\omega,\omega]\,ds,
		\end{align*}
and satisfies
\begin{equation}\label{eq:E-r-bound}
	\|E_r\|_{L^\infty(\Sn)}
	\le C_n(M+1).
\end{equation}
	The estimate for $E_r$ follows term by term, since
	\eqref{eq:near-integral} and \eqref{eq:far-remainder} bound the first two
	integrals after division by $r^2$, while \eqref{eq:h-third-bound} bounds the
	third-order harmonic remainder by $C_n(M+1)$.
	
	To pass from \eqref{eq:preliminary-expansion} to
	\eqref{eq:decomposition}, set $r=e^{-t}$ and observe that
	\[
	\frac{\bv(r\omega)-\bv(0)-rD\bv(0)\omega}{r^2}
	=\bA(t,\omega)-e^{2t}\bv(0)-e^tD\bv(0)\omega.
	\]
	Noting that
	$\bigl[e^{2t}\bv(0)\bigr]_2=0$ and
	$\bigl[e^tD\bv(0)\omega\bigr]_2=0$, we see that the constant and linear terms
	make no contribution after applying $[\,\cdot\,]_2$.
	Taking quadratic matrix coefficients of both sides of
	\eqref{eq:preliminary-expansion} and using
	\eqref{eq:q-F2-definitions} and
	$[Q_{\widetilde{\mathbf q}(r)}]_2=\widetilde{\mathbf q}(r)$, we obtain
	\[
	\qtwo(t)=\widetilde{\mathbf q}(r)+[E_r]_2.
	\]
	We now define
	\[
	\mathbf R(t,\omega)=E_r(\omega)-Q_{[E_r]_2}(\omega),
	\]
	which, by \eqref{eq:coefficient-identities}, satisfies
	\[
	[\mathbf R(t,\cdot)]_2
	=[E_r]_2-[Q_{[E_r]_2}]_2
	=[E_r]_2-[E_r]_2=0.
	\]
	The estimates \eqref{eq:Q-inner-product} and
	\eqref{eq:coefficient-bounds-basic} further imply
	\[
	\|Q_{[E_r]_2}\|_{L^\infty(\Sn)}\le|[E_r]_2|
	\le\kappa_n^{-1/2}\|E_r\|_{L^2(\Sn)}
	\le\kappa_n^{-1/2}\|E_r\|_{L^\infty(\Sn)}.
	\]
The triangle inequality, the definition of $\mathbf R$, and
\eqref{eq:E-r-bound} now give
\begin{align*}
	\|\mathbf R(t,\cdot)\|_{L^\infty(\Sn)}
	&\le \|E_r\|_{L^\infty(\Sn)}
	+\|Q_{[E_r]_2}\|_{L^\infty(\Sn)}\\
	&\le (1+\kappa_n^{-1/2})
	\|E_r\|_{L^\infty(\Sn)}\\
	&\le C_n(M+1).
\end{align*}
	This proves \eqref{eq:decomposition}--\eqref{eq:R-properties}.
	
	\emph{Step 4. The
		ordinary differential equations for the matrix coefficient.}
	For a $C^2$ function $\phi$ on $\Sn$, extend it away from the origin by
	keeping it constant on every ray
	\[
	\widetilde\phi(x)=\phi\left(\frac{x}{|x|}\right),
	\qquad x\ne0,
	\]
	and define
	\[
	\Delta_{\Sn}\phi(\omega):=\Delta\widetilde\phi(\omega),
	\qquad \omega\in\Sn,
	\]
	which is the Laplace--Beltrami operator on the unit sphere.  Equivalently,
	direct differentiation in polar coordinates yields, for every $C^2$ scalar
	function $g$,
	\begin{equation}\label{eq:polar-laplacian}
		\Delta g(r\omega)=\partial_{rr}g(r\omega)
		+\frac{n-1}{r}\partial_rg(r\omega)
		+\frac1{r^2}\Delta_{\Sn}\bigl(g(r\,\cdot)\bigr)(\omega).
	\end{equation}
	Here $g(r\,\cdot)$ is the function on $\Sn$ obtained by fixing $r$, and
	the derivatives with respect to $r$ are taken with $\omega$ fixed.  For the
	present weak solution, choose $p>n$ in \eqref{eq:finite-p}.  On each compact
	annulus about the origin, smooth approximation in $W^{2,p}$ and passage to
	the limit justify the same polar-coordinate identity in the sense of
	distributions.
	
	Since $\bv(r\omega)=r^2\bA(t,\omega)$ and $$t=-\log r，$$ the chain rule yields
	\begin{align*}
		\partial_{rr}\bv(r\omega)&=2\bA-3\bA_t+\bA_{tt},\\
		\frac{n-1}{r}\partial_r\bv(r\omega)&=(n-1)(2\bA-\bA_t),
	\end{align*}
and
\begin{align*}
\frac1{r^2}\Delta_{\Sn}\bigl(\bv(r\,\cdot)\bigr)(\omega)
=\Delta_{\Sn}\bA(t,\omega).
\end{align*}
	All terms involving $\bA$ on the right-hand side are evaluated at $(t,\omega)$.
	Substitution in \eqref{eq:polar-laplacian} therefore results in
	\begin{equation}\label{eq:cylinder}
		\bA_{tt}-(n+2)\bA_t+\Delta_{\Sn}\bA+2n\bA=\cN(\bA),
	\end{equation}
	in the sense of distributions on $(0,\infty)\times\Sn$.

	For $H\in\mathcal X_{n,m}$ and $\alpha=1,\ldots,m$, define the 
	quadratic polynomial
	\[
	p_H^\alpha(x):=\sum_{i,j=1}^nH_{ij}^\alpha x_ix_j.
	\]
	Since $H^\alpha$ is symmetric and trace free, direct differentiation shows
	that
	\[
	\Delta p_H^\alpha=2\sum_{i=1}^nH_{ii}^\alpha=0.
	\]
	If $x=r\omega$, then $x_i=r\omega_i$, and hence
	\[
	p_H^\alpha(r\omega)
	=\sum_{i,j=1}^nH_{ij}^\alpha(r\omega_i)(r\omega_j)
	=r^2(Q_H)_\alpha(\omega).
	\]
	It follows componentwise that
\begin{align*}
\partial_{rr}p_H^\alpha(r\omega)=2(Q_H)_\alpha(\omega),
\end{align*}
and
\begin{align*}
\frac{n-1}{r}\partial_rp_H^\alpha(r\omega)
=2(n-1)(Q_H)_\alpha(\omega).
\end{align*}
	Applying \eqref{eq:polar-laplacian} to $p_H^\alpha$ yields
	\[
	0=2(Q_H)_\alpha+2(n-1)(Q_H)_\alpha
	+\Delta_{\Sn}(Q_H)_\alpha.
	\]
	Consequently,
	\begin{equation}\label{eq:sphere-quadratic-eigenvalue}
		\Delta_{\Sn}Q_H=-2nQ_H.
	\end{equation}
	For twice continuously differentiable functions $\phi_1$ and $\phi_2$, the
	integration-by-parts identity on the sphere reads
	\[
	\int_{\Sn}\phi_1\,\Delta_{\Sn}\phi_2\,d\sigma
	=\int_{\Sn}\phi_2\,\Delta_{\Sn}\phi_1\,d\sigma.
	\]
	Let $\eta\in C_c^\infty((0,\infty))$ and
	$H\in\mathcal X_{n,m}$.  Test \eqref{eq:cylinder} against
	$\eta(t)Q_H(\omega)$.  For almost every fixed $t$, integration by parts on the
	sphere and \eqref{eq:sphere-quadratic-eigenvalue} make the spherical part
	vanish
	\begin{align*}
		&\int_{\Sn}\bigl(\Delta_{\Sn}\bA(t,\omega)+2n\bA(t,\omega)\bigr)
		\cdot Q_H(\omega)\,d\sigma(\omega)\\
		={}&\int_{\Sn}\bA(t,\omega)\cdot
		\bigl(\Delta_{\Sn}Q_H(\omega)+2nQ_H(\omega)\bigr)
		\,d\sigma(\omega)=0.
	\end{align*}
On the other hand, for the remaining terms, the first identity in
	\eqref{eq:coefficient-identities} gives
	\[
	\int_{\Sn}\bA(t,\omega)\cdot Q_H(\omega)\,d\sigma
	=\kappa_n\langle\qtwo(t),H\rangle,
	\]
	and
	\[ \int_{\Sn}\cN(\bA(t,\omega))\cdot Q_H(\omega)\,d\sigma
	=\kappa_n\langle\Ftwo(t),H\rangle.
	\]
	After integrating by parts twice in $t$ for the term involving $\qtwo''$
	and once for the term involving $\qtwo'$, we obtain the distributional
	testing identity
	\begin{align*}
		0=\kappa_n\int_0^\infty
		\bigl(\langle\qtwo(t),H\rangle\eta''(t)
		+(n+2)\langle\qtwo(t),H\rangle\eta'(t)-\langle\Ftwo(t),H\rangle\eta(t)\bigr)\,dt,
	\end{align*}
where the compact support of $\eta$ eliminates all boundary terms.  By the
	definition of distributional derivatives, the displayed integral means that
	the scalar distribution
	$\langle\qtwo''-(n+2)\qtwo'-\Ftwo,H\rangle$ vanishes.  Hence, we obtain
	\[
	\left\langle\qtwo''-(n+2)\qtwo'-\Ftwo,H\right\rangle=0,
	\]
	for every $H\in\mathcal X_{n,m}$.  Since $\mathcal X_{n,m}$ is finite
	dimensional, these scalar distributional
	identities, taken against a basis of $\mathcal X_{n,m}$, are equivalent to the
	matrix-valued identity
	\begin{equation}\label{eq:q-ODE}
		\qtwo''-(n+2)\qtwo'=\Ftwo.
	\end{equation}
	Moreover, \eqref{eq:q-ODE} implies that
	$\qtwo\in C^{1,1}_{\mathrm{loc}}$, since it may be rewritten as
	\[
	\bigl(\qtwo'-(n+2)\qtwo\bigr)'=\Ftwo.
	\]
	Since $\Ftwo\in L^\infty_{\mathrm{loc}}$, the quantity
	$\qtwo'-(n+2)\qtwo$ has a locally Lipschitz representative.  The function
	$\qtwo$ is continuous by
	\eqref{eq:A-definition}--\eqref{eq:q-F2-definitions}, so the identity
	$\qtwo'=(n+2)\qtwo+(\qtwo'-(n+2)\qtwo)$ first yields
	$\qtwo\in C^1_{\mathrm{loc}}$.  The right-hand side of this identity is then
	locally Lipschitz, and hence $\qtwo\in C^{1,1}_{\mathrm{loc}}$ entrywise.
	If $t\ge0$ and $\omega\in\Sn$, then
	$e^{-t}\omega\in\overline B_1\subset B_2$.  Consequently,
	\eqref{eq:A-definition} and $\|\bv\|_{L^\infty(B_2)}\le M$ imply
	\[
	|\bA(t,\omega)|
	=e^{2t}|\bv(e^{-t}\omega)|
	\le Me^{2t}.
	\]
	Together with \eqref{eq:coefficient-bounds-basic}, this implies
	\begin{equation}\label{eq:q-growth}
		|\qtwo(t)|\le\kappa_n^{-1/2}M e^{2t}.
	\end{equation}
	
	\emph{Step 5. Solution of the ordinary differential
		equations for the matrix coefficients.}
	Since $|\cN(\bA)|\le1$, $\sigma(\Sn)=1$, and
	\eqref{eq:coefficient-bounds-basic} holds,
	\[
	|\Ftwo(t)|
	\le\kappa_n^{-1/2}\|\cN(\bA(t,\cdot))\|_{L^2(\Sn)}
	\le\kappa_n^{-1/2}.
	\]

	With $\lambda:=n+2$, multiplying
	$\qtwo''-\lambda\qtwo'=\Ftwo$ by $e^{-\lambda t}$ gives
	\[
	\frac d{dt}\bigl(e^{-\lambda t}\qtwo'(t)\bigr)
	=e^{-\lambda t}\Ftwo(t).
	\]
	For any fixed $t_0>0$, the preceding bound ensures that
	$e^{-\lambda s}\Ftwo(s)$ is absolutely integrable on $(t_0,\infty)$, so
	$e^{-\lambda t}\qtwo'(t)$ has a finite limit as $t\to\infty$, which we denote
	by $C$.  Integrating from $t$ to infinity and multiplying by
	$e^{\lambda t}$ yields
	\[
	\qtwo'(t)=-\int_t^\infty e^{-\lambda(s-t)}\Ftwo(s)\,ds+Ce^{\lambda t},
	\]
	where $C\in\mathcal X_{n,m}$ is independent of $t$.  Since the integral has
	norm at most $\kappa_n^{-1/2}/\lambda$, the assumption $C\ne0$ would imply,
	after integration from $t_0$ to $t$, that
	\[
	\qtwo(t)=\qtwo(t_0)+\frac{C}{\lambda}
	\bigl(e^{\lambda t}-e^{\lambda t_0}\bigr)
	-\int_{t_0}^t\int_\tau^\infty
	e^{-\lambda(s-\tau)}\Ftwo(s)\,ds\,d\tau.
	\]
	The norm of the double integral is at most
	$\kappa_n^{-1/2}(t-t_0)/\lambda$.  Consequently,
	$e^{-\lambda t}\qtwo(t)\to C/\lambda\ne0$.  This contradicts
	\eqref{eq:q-growth}, since $\lambda=n+2>2$ implies
	\[
	e^{-\lambda t}|\qtwo(t)|
	\le\kappa_n^{-1/2}M e^{-nt}\longrightarrow0\quad\text{as }t\to\infty.
	\]
	Hence $C=0$, proving the first formula in \eqref{eq:qprime}.  Finally, since
	$|\Ftwo(s)|\le\kappa_n^{-1/2}$,
	\[
	|\qtwo'(t)|
	\le\kappa_n^{-1/2}\int_t^\infty e^{-(n+2)(s-t)}\,ds
	=\frac{\kappa_n^{-1/2}}{n+2},
	\]
	which is the second estimate in \eqref{eq:qprime}.
	
	\emph{Step 6. Identification of the limit and recovery
		of the Hessian at the origin.}
	Finally, $\bv(0)\ne0$ and continuity imply that
	$|\bv|\ge|\bv(0)|/2$ in some ball $B_\rho$.  Choosing $p>n$ in
	\eqref{eq:finite-p} and applying Sobolev embedding, we obtain
	$\bv\in C^{1,\alpha}(B_\rho)$, after reducing $\rho$ if necessary, where
	$\alpha=1-n/p$.  The map \eqref{eq:N} has bounded derivative on every
	closed set contained in $\{z:|z|>|\bv(0)|/4\}$.  After decreasing $\rho$
	once more if necessary, the mean value theorem and
	$\bv\in C^{0,\alpha}(B_\rho)$ therefore imply
	$\cN(\bv)\in C^{0,\alpha}(B_\rho)$.  The interior Schauder estimate for
	$\Delta\bv=\cN(\bv)$ then yields
	$\bv\in C^{2,\alpha}(B_{\rho/2})$.  The resulting Taylor expansion is uniform
	in $\omega$ and has the form
	\[
	\bA(t,\omega)=e^{2t}\bv(0)+e^tD\bv(0)\omega
	+\frac12D^2\bv(0)[\omega,\omega]+o(1),
	\]
	where $o(1)\to0$ in $L^\infty(\Sn;\R^m)$.  The estimate
	\eqref{eq:coefficient-bounds-basic}, together with $\sigma(\Sn)=1$, also
	shows that the matrix coefficient of $o(1)$ tends to zero, namely
	\[
	|[o(1)]_2|
	\le\kappa_n^{-1}\|o(1)\|_{L^1(\Sn)}
	\le\kappa_n^{-1}\|o(1)\|_{L^\infty(\Sn)}
	\longrightarrow0.
	\]
	The constant and linear terms are annihilated by the projection and
	therefore make no contribution.  Applying
	\eqref{eq:quadratic-Hessian-coefficient} to each component with
	$M^\alpha=D^2v_\alpha(0)$, and then passing to the limit in the resulting
	coefficient identity, we obtain
	\[
	\qtwo_\infty^\alpha
	=\frac12\left(D^2v_\alpha(0)
	-\frac{\Delta v_\alpha(0)}nI_n\right),
	\]
	which proves \eqref{eq:q-Hessian-relation-1}.
	Since $\Delta\bv(0)=\cN(\bv(0))$,
	\eqref{eq:q-Hessian-relation-1} is equivalent to
	\[
	D^2v_\alpha(0)
	=2\qtwo_\infty^\alpha+\frac{\cN_\alpha(\bv(0))}nI_n.
	\]
	The identity $\operatorname{tr}\qtwo_\infty^\alpha=0$ makes the two
	matrices on the right orthogonal in the Frobenius inner product, while
	$\|I_n\|_F^2=n$.  Hence
	\[
	\|D^2v_\alpha(0)\|_F^2
	=4\|\qtwo_\infty^\alpha\|_F^2
	+\frac{|\cN_\alpha(\bv(0))|^2}{n}.
	\]
	Summing over $\alpha$ and using $|\cN(\bv(0))|=1$ yields
	\[
	|D^2\bv(0)|^2=4|\qtwo_\infty|^2+\frac1n,
	\]
	as claimed.
\end{proof}

\section{Quadratic projection estimates on the sphere}\label{sec:sphere-estimates}

This section establishes three estimates for $[\cN(\bA)]_2$, the quadratic
coefficient of the right-hand side on the sphere.  The first controls the
effect of a bounded perturbation, while the second proves integrability over
logarithmic scales when the affine part is dominant.  The third shows that,
when a sufficiently large trace-free quadratic term is dominant, the
projected right-hand side has a definite positive component in its direction.
Together, these estimates provide the two forms of control required in
Section~\ref{sec:coefficient-bounds}, namely  bounded total variation in the
affine-dominant range and strict decrease of the quadratic coefficient in the
quadratic-dominant range.

\begin{lemma}\label{lem:normalization-perturbation}
	Let $c\in\R^m$, $B\in\R^{m\times n}$, and
	\[
	L(\omega)=c+B\omega,
	\qquad S=|c|+\|B\|_F>0.
	\]
	Then, for $0<\delta<1$,
	\begin{equation}\label{eq:small-sphere-set}
		\mathcal H^{n-1}\bigl(\{\omega\in\Sn:|L(\omega)|\le\delta S\}\bigr)
		\le C_n\delta^{1/2}.
	\end{equation}
	Consequently, for every $E\in L^\infty(\Sn;\R^m)$,
	\begin{equation}\label{eq:normalization-difference}
		\int_{\Sn}|\cN(L+E)-\cN(L)|\,d\sigma
		\le C_n\min\left\{1,
		\left(\frac{\|E\|_{L^\infty(\Sn)}}{S}\right)^{1/2}\right\}.
	\end{equation}
\end{lemma}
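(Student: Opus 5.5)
The plan has two parts: first the small-set bound \eqref{eq:small-sphere-set}, then \eqref{eq:normalization-difference} from it and a pointwise estimate for $\cN$.

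\emph{Small-set bound.} Split according to whether the constant or the linear part dominates.

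\emph{Case $|c|\ge S/2$.} First suppose $\|B\|_F\le |c|/4$. Then $|L(\omega)|\ge|c|-\|B\|_F\ge 3S/8$, so for $\delta<3/8$ the set is empty. For $\delta\ge 3/8$ the trivial bound $\mathcal H^{n-1}(\Sn)\le C_n\delta^{1/2}$ suffices. Otherwise $\|B\|_F$ and $|c|$ are both comparable to $S$, and this falls under the next case.

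\emph{Case $\|B\|_F\ge S/8$.} Let $b_1$ be a row of $B$ of maximal norm, say with index $\alpha$. Since $B$ has $m$ rows, one only gets $|b_1|\ge\|B\|_F/\sqrt m$, which would make the constant depend on $m$. To avoid this, use the largest singular value instead. Choose unit vectors $e\in\R^m$ and $\nu\in\R^n$ with $e^TB\nu=\|B\|_{op}$. Then
\[
|L(\omega)|\ge |e\cdot L(\omega)|=|e\cdot c+ (B^Te)\cdot\omega|,
\]
with $|B^Te|=\|B\|_{op}$. The difficulty is that $\|B\|_{op}$ can be much smaller than $\|B\|_F$ (by a factor up to $\sqrt{\min(m,n)}$). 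Since $\omega\in\R^n$, the rank of $B$ is at most $n$, so $\|B\|_F\le\sqrt n\|B\|_{op}$. This gives $\|B\|_{op}\ge S/(8\sqrt n)$, which depends only on $n$.

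The set is then contained in a slab $\{\omega:|a+w\cdot\omega|\le\delta S\}$ with $|w|\ge S/(8\sqrt n)$, that is, a slab of width $\le C_n\delta$ in the direction $w/|w|$. The $\mathcal H^{n-1}$-measure of the intersection of $\Sn$ with a slab $\{|\omega\cdot\nu-\tau|\le\epsilon\}$ is at most $C_n\epsilon^{1/2}$. The worst case is a tangential slab near a pole, where the cap has measure about $\epsilon^{(n-1)/2}\le\epsilon^{1/2}$; elsewhere the density of $\omega\cdot\nu$ is $(1-s^2)^{(n-3)/2}$, which is integrable with $\int_{|s-\tau|\le\epsilon}(1-s^2)^{-1/2}ds\le C\epsilon^{1/2}$. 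This yields the $\delta^{1/2}$ bound, and is exactly why the exponent is $1/2$. When $|c|\ge S/2$ but $\|B\|_F\ge |c|/4\ge S/8$, the same argument applies.

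\emph{Perturbation estimate.} Put $\eta=\|E\|_{L^\infty}$. The bound by $2$ is trivial, so assume $\eta<S$. Use the elementary inequality
\[
|\cN(z)-\cN(z')|\le \frac{2|z-z'|}{|z|}\qquad (z\ne0),
\]
valid also when $z'=0$ up to the constant. Set $\delta=(\eta/S)^{1/2}\in(0,1)$. On $\{|L|\le\delta S\}$, bound the integrand by $2$; by \eqref{eq:small-sphere-set} this contributes $C_n\delta^{1/2}$. Off this set the integrand is at most $2\eta/(\delta S)=2\delta$. This gives $C_n(\delta+\delta^{1/2})$, which is $C_n(\eta/S)^{1/4}$, weaker than claimed.

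To get the exponent $1/2$, integrate by layers instead. Using the slab description, one has the sharper layer bound $\sigma(\{|L|\le\lambda S\})\le C_n\min(1,\lambda^{1/2})$ for all $\lambda$. Then
\[
\int\min\Bigl(2,\frac{2\eta}{|L|}\Bigr)\,d\sigma\le C\int_0^\infty \sigma\bigl(\{|L|\le \eta/s\}\bigr)\,ds,
\]
which after substituting $\lambda=\eta/(sS)$ becomes $C_n\int_0^2\min\bigl(1,(\eta/(sS))^{1/2}\bigr)ds\le C_n(\eta/S)^{1/2}$. The main obstacle is the dimension-only constant in the slab reduction, via $\mathrm{rank}\,B\le n$, together with the correct layer-cake exponent.
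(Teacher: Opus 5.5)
Your argument is correct and is essentially the paper's proof: the same constant-versus-linear dichotomy, then a reduction of $|L|$ to a scalar slab on $\Sn$ by projecting onto one unit vector of $\R^m$ with a loss depending only on $n$, then the $C_n\delta^{1/2}$ slab bound, then a pointwise Lipschitz bound for $\cN$ summed over level sets of $|L|$ (you take the top singular vector with $\|B\|_F\le\sqrt n\,\|B\|_{\mathrm{op}}$ where the paper takes the direction of the largest column $Be_{j_0}$, and your layer-cake integral is the continuous form of the paper's dyadic decomposition). The only slip is in your displayed layer-cake inequality: since the integrand is at most $2$, the $s$-integral runs over $(0,2)$, not $(0,\infty)$, and over $(0,\infty)$ the majorant $(\eta/(sS))^{1/2}$ would not be integrable; your final computation already uses the correct range.
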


\begin{proof}
	Writing $b:=\|B\|_F$, we infer from the Cauchy--Schwarz inequality that, for
	every $\omega\in\Sn$,
	\begin{align*}
		|B\omega|^2
		=\sum_{\alpha=1}^m
		\left(\sum_{j=1}^n B_{\alpha j}\omega_j\right)^2
		\le\sum_{\alpha=1}^m
		\left(\sum_{j=1}^nB_{\alpha j}^2\right)
		\left(\sum_{j=1}^n\omega_j^2\right)
		=\|B\|_F^2=b^2.
	\end{align*}
	If $|c|\ge2b$, then
	\[
	|c+B\omega|\ge|c|-|B\omega|
	\ge|c|-b\ge\frac S3.
	\]
	Thus the set in \eqref{eq:small-sphere-set} is empty when
	$0<\delta<1/3$, whereas for $1/3\le\delta<1$ its measure is at most
	$\mathcal H^{n-1}(\Sn)\le C_n\delta^{1/2}$, which proves the desired
	estimate whenever $|c|\ge2b$.
	
On the other hand, we now assume that $|c|<2b$, which implies that
$b>0$ and $S<3b$, and let $e_1,\ldots,e_n$ denote the standard basis
of $\R^n$.  Since
	\[
	b^2=\|B\|_F^2=\sum_{j=1}^n|Be_j|^2,
	\]
	there is an index $j_0$ such that
	\[
	|Be_{j_0}|\ge\frac b{\sqrt n}.
	\]
	To reduce the vector inequality to a scalar one, choose
	\[
	\zeta:=\frac{Be_{j_0}}{|Be_{j_0}|},
	\quad\text{and}\quad \rho:=|B^T\zeta|.
	\]
	Since $|\zeta|=1$, the $j_0$-th coordinate of $B^T\zeta$ satisfies
	\[
	(B^T\zeta)_{j_0}=\zeta\cdot Be_{j_0}=|Be_{j_0}|.
	\]
	Consequently,
	\[
	\rho\ge |Be_{j_0}|\ge\frac b{\sqrt n}>0.
	\]
	Hence the quantities
	\[
	\xi:=\frac{B^T\zeta}{\rho},
	\quad\text{and}\quad \tau:=\frac{\zeta\cdot c}{\rho}
	\]
	are well defined with $|\xi|=1$, and obey
	\[
	\zeta\cdot(c+B\omega)=\rho(\tau+\xi\cdot\omega)
	\quad \text{for}\quad \omega\in\Sn.
	\]
	If $|c+B\omega|\le\delta S$, then
	\[
	|\tau+\xi\cdot\omega|
	=\frac{|\zeta\cdot(c+B\omega)|}{\rho}
	\le\frac{|c+B\omega|}{\rho}
	\le\frac{\delta S}{\rho}<3\sqrt n\,\delta.
	\]
	Therefore the set in \eqref{eq:small-sphere-set} is contained in
	\begin{equation}\label{eq:coordinate-band}
		E_{\tau,\delta}:=
		\{\omega\in\Sn:|\tau+\xi\cdot\omega|\le3\sqrt n\,\delta\}.
	\end{equation}

	Since orthogonal transformations preserve $\mathcal H^{n-1}$ on $\Sn$,
	we may rotate the domain coordinates, assume that
	$\xi\cdot\omega=\omega_1$, and write
	\[
	\omega=(\cos\theta,\sin\theta\,\eta),
	\quad\text{for}\quad 0\le\theta\le\pi,\quad
	\eta\in\mathbb S^{n-2},
	\]
	where $\mathbb S^{n-2}$ is the unit sphere in $\R^{n-1}$.  In particular, for $n=2$,
	this formula is understood with $\mathbb S^0=\{-1,1\}$ and
	$\mathcal H^0(\mathbb S^0)=2$.

	Noting that the condition defining the band is
	$-3\sqrt n\,\delta\le\tau+\cos\theta\le3\sqrt n\,\delta$ and the map
	$\theta\mapsto\cos\theta$ is continuous and strictly decreasing on
	$[0,\pi]$, the admissible values of $\theta$ form either the empty set or a
	possibly degenerate closed interval $[\theta_1,\theta_2]\subset[0,\pi]$.
	In the nonempty case, the band therefore admits the parametrization
	\[
	E_{\tau,\delta}
	=\{(\cos\theta,\sin\theta\,\eta):\theta_1\le\theta\le\theta_2,
	\ \eta\in\mathbb S^{n-2}\}.
	\]
	In these coordinates, the surface element is
	$(\sin\theta)^{n-2}\,d\mathcal H^{n-2}(\eta)\,d\theta$, and hence the
	preceding parametrization yields
	\begin{align*}
		\mathcal H^{n-1}(E_{\tau,\delta})
		&=\int_{\theta_1}^{\theta_2}\int_{\mathbb S^{n-2}}
		(\sin\theta)^{n-2}\,d\mathcal H^{n-2}(\eta)\,d\theta\\
		&=\mathcal H^{n-2}(\mathbb S^{n-2})
		\int_{\theta_1}^{\theta_2}(\sin\theta)^{n-2}\,d\theta\\
		&\le \mathcal H^{n-2}(\mathbb S^{n-2})
		(\theta_2-\theta_1).
	\end{align*}
The possible values of $\omega_1$ in the band lie in an interval whose length
is bounded by $6\sqrt n\,\delta$, which implies that
	\[
	\cos\theta_1-\cos\theta_2\le6\sqrt n\,\delta.
	\]
	On the other hand,
	\begin{align*}
		\cos\theta_1-\cos\theta_2
		&=2\sin\left(\frac{\theta_1+\theta_2}{2}\right)
		\sin\left(\frac{\theta_2-\theta_1}{2}\right)\\
		&\ge2\sin^2\left(\frac{\theta_2-\theta_1}{2}\right)\\
		&\ge\frac{2}{\pi^2}(\theta_2-\theta_1)^2.
	\end{align*}
	Consequently,
	\[
	\theta_2-\theta_1
	\le\pi(3\sqrt n\,\delta)^{1/2}
	\le C_n\delta^{1/2},
	\]
	and therefore the band in \eqref{eq:coordinate-band} has measure at most
	$C_n\delta^{1/2}$ whenever it is nonempty.  Noting that the same conclusion is
	immediate for the empty band, this proves \eqref{eq:small-sphere-set}.

	For every $x,y\in\R^m$, the pointwise estimate
	\begin{equation}\label{eq:N-pointwise-difference}
		|\cN(x+y)-\cN(x)|
		\le4\min\{1,|y|/|x|\}
	\end{equation}
	holds, where the minimum is interpreted as $1$ when $x=0$.  Since
	$\cN(0)=0$ and $|\cN|\le1$, the estimate is immediate in this exceptional
	case, and we may henceforth assume that $x\ne0$.  If
	$|y|\ge|x|/2$, then
	\[
	|\cN(x+y)-\cN(x)|
	\le2\le4\min\left\{1,\frac{|y|}{|x|}\right\}.
	\]
	If $|y|<|x|/2$, every point $x+sy$, with $0\le s\le1$, on the segment
	joining $x$ to $x+y$ satisfies
	\[
	|x+sy|\ge |x|-s|y|>\frac{|x|}{2}.
	\]
	Writing $\widehat z:=z/|z|$, we compute from \eqref{eq:N} that, for
	$z\ne0$ and $h\in\R^m$,
	\[
	D\cN(z)h
	=\frac{h}{|z|}-\frac{(z\cdot h)z}{|z|^3}
	=\frac1{|z|}
	\bigl(h-(\widehat z\cdot h)\widehat z\bigr).
	\]
	Since the vector in parentheses is the orthogonal projection of $h$ onto
	$\widehat z^\perp$, it satisfies
	\[
	\left|h-(\widehat z\cdot h)\widehat z\right|^2
	=|h|^2-(\widehat z\cdot h)^2\le|h|^2.
	\]
	Consequently, for every $h\in\R^m$,
	\begin{equation}\label{eq:N-derivative-bound}
		|D\cN(z)h|\le\frac{|h|}{|z|},
		\qquad z\ne0.
	\end{equation}
	Applying the fundamental theorem of calculus along the segment and using
	\eqref{eq:N-derivative-bound}, we obtain
	\begin{align*}
		|\cN(x+y)-\cN(x)|
		\le\int_0^1|D\cN(x+sy)y|\,ds
		\le\int_0^1\frac{|y|}{|x+sy|}\,ds
		\le2\frac{|y|}{|x|}.
	\end{align*}
This, together with the preceding case, gives the estimate
	\eqref{eq:N-pointwise-difference}.

	Set $\varepsilon:=\|E\|_{L^\infty(\Sn)}/S$ and observe that the desired
	estimate is immediate when $\varepsilon=0$ or $\varepsilon\ge1$: in the
	first case, $\cN(L+E)=\cN(L)$ almost everywhere, while in the second case
	the integrand is bounded by $2$.  We may therefore assume that
	$0<\varepsilon<1$ and choose the unique integer $J\ge0$ such that
	\begin{equation}\label{eq:dyadic-J}
		2^J\varepsilon<1\le2^{J+1}\varepsilon.
	\end{equation}
	With this choice of $J$, decompose the sphere into the disjoint sets
	\begin{align*}
		\mathcal A_{-1}&=\{|L|\le\varepsilon S\},\\
		\mathcal A_j&=\{2^j\varepsilon S<|L|\le2^{j+1}\varepsilon S\},
		\qquad 0\le j\le J-1,
	\end{align*}
and
	\begin{align*}
		\mathcal A_J&=\{|L|>2^J\varepsilon S\},
	\end{align*}
	where the second family is omitted when $J=0$.  On $\mathcal A_{-1}$, the
	bound $2$ for the integrand and \eqref{eq:small-sphere-set} imply
	\[
	\int_{\mathcal A_{-1}}|\cN(L+E)-\cN(L)|\,d\sigma
	\le\frac{2\mathcal H^{n-1}(\mathcal A_{-1})}
	{\mathcal H^{n-1}(\Sn)}
	\le C_n\varepsilon^{1/2}.
	\]
	On $\mathcal A_j$, inequality
	\eqref{eq:N-pointwise-difference}, with $x=L(\omega)$ and
	$y=E(\omega)$, implies the pointwise bound $2^{2-j}$.  Since
	$2^{j+1}\varepsilon<1$ for $j\le J-1$, the set $\mathcal A_j$ is contained
	in $\{|L|\le2^{j+1}\varepsilon S\}$, whose measure is bounded by
	$C_n(2^{j+1}\varepsilon)^{1/2}$.  Consequently,
	\begin{align*}
		\sum_{j=0}^{J-1}\int_{\mathcal A_j}
		|\cN(L+E)-\cN(L)|\,d\sigma
		&\le C_n\sum_{j=0}^{J-1}
		2^{-j}(2^{j+1}\varepsilon)^{1/2}\\
		&\le C_n\varepsilon^{1/2}
		\sum_{j=0}^\infty2^{-j/2}
		\le C_n\varepsilon^{1/2}.
	\end{align*}
	Finally, on $\mathcal A_J$, inequality
	\eqref{eq:N-pointwise-difference} gives the pointwise bound
	$2^{2-J}$, while \eqref{eq:dyadic-J} implies
	$2^{-J} \le 2\varepsilon$.  The contribution of this set is therefore bounded by $8\varepsilon^{1/2}$, which, together with the two
	preceding estimates, proves \eqref{eq:normalization-difference}.
\end{proof}

The next lemma integrates the preceding comparison estimate over the
logarithmic-scale variable $t$, while allowing either the constant term or
the linear term to vanish.

\begin{lemma}
	\label{lem:weighted-sphere-integrability}
	For $a\in\R^m$ and $P\in\R^{m\times n}$, set
	\[
	L(t,\omega)=e^{2t}a+e^tP\omega.
	\]
	Then there exists a positive  constant $C_n$, depending only on $n$, such that
	\begin{equation}\label{eq:weighted-sphere-L1}
		\int_{-\infty}^{\infty}
		\left|[\cN(L(t,\cdot))]_2\right|\,dt\le C_n.
	\end{equation}
\end{lemma}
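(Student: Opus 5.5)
The plan is to use the zero-homogeneity $\cN(\rho z)=\cN(z)$ for $\rho>0$ to remove one exponential factor, and then to compare $\cN(L)$ with one of two profiles whose quadratic coefficient vanishes identically. At large scales the comparison profile is a constant; at small scales it is an odd function of $\omega$. The error of each comparison will be controlled by \eqref{eq:normalization-difference} of Lemma~\ref{lem:normalization-perturbation}, together with the $L^1$ bound in \eqref{eq:coefficient-bounds-basic}. The resulting bound is integrable in $dt$ because it decays geometrically away from a single crossing scale.

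\emph{Reduction.} Since $e^{-t}>0$, we have $\cN(L(t,\omega))=\cN(sa+P\omega)$ with $s=e^t$ and $dt=ds/s$. It therefore suffices to bound $\int_0^\infty|[\cN(sa+P\,\cdot)]_2|\,ds/s$. Write $b:=\|P\|_F$.

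\emph{Degenerate cases.} If $a=b=0$, the integrand vanishes identically. If $b=0$, then $\cN(sa)$ is constant on $\Sn$, so its coefficient is $0$. If $a=0$, then $\omega\mapsto\cN(P\omega)$ is odd because $\cN(-z)=-\cN(z)$. Since the weight $\omega_i\omega_j-\delta_{ij}/n$ in \eqref{eq:matrix-coefficient} is even, the coefficient again vanishes. Hence we may assume $a\neq0$ and $b>0$, and we set $s_0:=b/|a|$.

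\emph{Large scales, $s\ge s_0$.} Apply Lemma~\ref{lem:normalization-perturbation} with the affine function $sa$ (so $c=sa$, $B=0$, $S=s|a|$) and the perturbation $E=P\omega$. Here $\|E\|_{L^\infty(\Sn)}\le b$, by the Cauchy--Schwarz bound $|P\omega|\le\|P\|_F$ from the proof of that lemma. Because $[\cN(sa)]_2=0$, the bound \eqref{eq:coefficient-bounds-basic} gives
\[
|[\cN(sa+P\,\cdot)]_2|\le\kappa_n^{-1}\int_{\Sn}|\cN(sa+P\omega)-\cN(sa)|\,d\sigma\le C_n\Bigl(\frac{s_0}{s}\Bigr)^{1/2}.
\]

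\emph{Small scales, $s\le s_0$.} Reverse the roles: take the affine function $P\omega$ (so $c=0$, $B=P$, $S=b$) and the constant perturbation $E=sa$. Then $\|E\|_{L^\infty(\Sn)}=s|a|$, and $[\cN(P\,\cdot)]_2=0$ by oddness. The same argument gives the bound $C_n(s/s_0)^{1/2}$.

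\emph{Integration.} Combining the two regimes,
\[
\int_0^\infty|[\cN(sa+P\,\cdot)]_2|\,\frac{ds}{s}\le C_n\left(\int_0^{s_0}\Bigl(\frac{s}{s_0}\Bigr)^{1/2}\frac{ds}{s}+\int_{s_0}^\infty\Bigl(\frac{s_0}{s}\Bigr)^{1/2}\frac{ds}{s}\right)=4C_n.
\]
This bound is independent of $a$, $P$ and $m$, which proves \eqref{eq:weighted-sphere-L1}.

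The only delicate input is the band estimate \eqref{eq:small-sphere-set} inside Lemma~\ref{lem:normalization-perturbation}, which is needed in the small-scale regime where the affine function $P\omega$ may vanish on a large set. Since that estimate is already established, the remaining work is bookkeeping: checking that in each regime the hypothesis $S>0$ holds and that the comparison profile has zero quadratic coefficient.
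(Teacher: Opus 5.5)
Your proof is correct. The reduction, the degenerate cases and the small-scale regime coincide with the paper's: the paper writes $\beta=e^t|a|/\|P\|_F$, which is your $s/s_0$, and applies \eqref{eq:normalization-difference} with $c=0$, $B=\widehat P$, $E=\beta e$. The difference is on the large-scale side.

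\emph{The paper's large-scale argument.} For $\beta\ge2$ it expands $\cN$ to second order about $e=a/|a|$. The constant term has zero quadratic coefficient, and so does the first-order term $\beta^{-1}(\widehat P\omega-(e\cdot\widehat P\omega)e)$, by oddness. An $m$-independent bound $|D^2\cN(\zeta)[h,k]|\le24\,|h|\,|k|$ for $|\zeta-e|\le1/2$ leaves a remainder of size $C_n\beta^{-2}$. A separate window $1\le\beta\le2$ is then handled by the trivial bound $\kappa_n^{-1/2}$.

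\emph{Your large-scale argument.} You compare directly with the constant profile $\cN(sa)$ using the same perturbation lemma. This gives only the decay $(s_0/s)^{1/2}$, but that is already integrable against $ds/s$. So you need neither $D^2\cN$ nor the intermediate window, and the two regimes become symmetric about the crossing scale $s_0$.

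\emph{What each route buys.} The paper's expansion obtains the faster rate $\beta^{-2}$ by exploiting the cancellation of the first-order term. Nothing later uses that rate, since Lemma~\ref{lem:remaining-change} only needs the $L^1(dt)$ bound. Your version can be simplified further: for $s\ge s_0$ the base $sa$ never vanishes, so the band estimate \eqref{eq:small-sphere-set} plays no role there. The pointwise bound \eqref{eq:N-pointwise-difference} alone gives $|\cN(sa+P\omega)-\cN(sa)|\le4s_0/s$, which is a better rate with no appeal to the lemma.
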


\begin{proof}
	If $a=0$, the invariance of $\cN$ under positive rescaling implies that
	$\cN(L(t,\omega))=\cN(P\omega)$, while its oddness gives
	\[
	\cN(P(-\omega))=-\cN(P\omega).
	\]
	This identity holds on the whole sphere, including the points at which
	$P\omega=0$, where both sides vanish. The function $\omega\mapsto\cN_\alpha(L(t,\omega))$ is odd, whereas
	$\omega\mapsto\omega_i\omega_j-\delta_{ij}/n$ is even.  Their product is
	therefore odd.  Since the antipodal map $\omega\mapsto-\omega$ preserves the
	surface measure on $\Sn$, we have
	\begin{align*}
		&\int_{\Sn}\cN_\alpha(L(t,\omega))
		\left(\omega_i\omega_j-\frac{\delta_{ij}}n\right)\,d\sigma(\omega)\\
		\qquad
		=&-\int_{\Sn}\cN_\alpha(L(t,\omega))
		\left(\omega_i\omega_j-\frac{\delta_{ij}}n\right)\,d\sigma(\omega)=0.
	\end{align*}
	Thus every entry in \eqref{eq:matrix-coefficient} vanishes, and consequently
	\[
	[\cN(L(t,\cdot))]_2=0.
	\]
	
For the trivial case $P=0$, then $\cN(L(t,\omega))$ is independent of $\omega$, while
	\eqref{eq:sphere-coordinate-two} implies that
	\[
	\int_{\Sn}
	\left(\omega_i\omega_j-\frac{\delta_{ij}}n\right)\,d\sigma(\omega)=0,
	\quad\text{for}\quad 1\le i,j\le n.
	\]
	and hence its quadratic coefficient vanishes as well.
	
	It remains to consider the non-trivial case $a\ne0$ and $P\ne0$, for which we set
	\[
	e=\frac a{|a|},\qquad
	\widehat P=\frac P{\|P\|_F},\quad \text{and}\quad
	\beta(t)=\frac{e^t|a|}{\|P\|_F}.
	\]
	The Cauchy--Schwarz inequality implies
	\begin{equation}\label{eq:P-hat-pointwise}
		|\widehat P\omega|\le\|\widehat P\|_F|\omega|=1,
		\quad\text{for}\quad \omega\in\Sn.
	\end{equation}
	Since $d\beta(t)/dt=\beta(t)$, we have $dt=d\beta/\beta$, while the identity
	$\cN(\rho z)=\cN(z)$ for $\rho>0$ allows us to write
	\[
	\cN(L(t,\omega))=\cN(\beta e+\widehat P\omega).
	\]
Noting that the function $\omega\mapsto\cN(\widehat P\omega)$ is odd on the whole
	sphere, including at points where $\widehat P\omega=0$, and
	$\omega_i\omega_j-\delta_{ij}/n$ is even,
	\eqref{eq:matrix-coefficient} implies that
	$[\cN(\widehat P\omega)]_2=0$.
	
	For $0<\beta\le1$, apply
	\eqref{eq:normalization-difference} with $c=0$, $B=\widehat P$, and
	$E=\beta e$.  In Lemma~\ref{lem:normalization-perturbation}, the
	corresponding quantity $S$ is
	$\|\widehat P\|_F=1$ and
	$\|E\|_{L^\infty(\Sn)}=\beta$, so
	\[
	\|\cN(\beta e+\widehat P\omega)-\cN(\widehat P\omega)\|_{L^1(\Sn)}
	\le C_n\beta^{1/2}.
	\]
	Linearity of the matrix coefficient and the first estimate in
	\eqref{eq:coefficient-bounds-basic} imply
	\begin{align}
		\left|[\cN(\beta e+\widehat P\omega)]_2\right|
		&=\left|[\cN(\beta e+\widehat P\omega)
		-\cN(\widehat P\omega)]_2\right|\notag\\
		&\le\kappa_n^{-1}
		\|\cN(\beta e+\widehat P\omega)
		-\cN(\widehat P\omega)\|_{L^1(\Sn)}\notag\\
		&\le C_n\beta^{1/2}. \label{eq:small-beta}
	\end{align}

	For $\beta\ge2$, set $z:=\widehat P\omega$, which satisfies $|z|\le1$ by
	\eqref{eq:P-hat-pointwise}.  Differentiating \eqref{eq:N} at the unit
	vector $e$, we obtain
	\[
	D\cN(e)h=h-(e\cdot h)e,
	\qquad h\in\R^m,
	\]
	where we used $|e|=1$.  Differentiating once more, we find that, for
	$\zeta\ne0$,
	\begin{align*}
		D^2\cN(\zeta)[h,k]
		={}&-|\zeta|^{-3}\bigl((\zeta\cdot k)h+(\zeta\cdot h)k
		+(h\cdot k)\zeta\bigr)\\
		&+3|\zeta|^{-5}(\zeta\cdot h)(\zeta\cdot k)\zeta.
	\end{align*}
	The Cauchy--Schwarz inequality therefore implies
	\[
	|D^2\cN(\zeta)[h,k]|\le6|\zeta|^{-2}|h|\,|k|.
	\]
	In particular, if $|\zeta-e|\le1/2$, then $|\zeta|\ge1/2$ and
	\begin{equation}\label{eq:N-second-derivative-bound}
		|D^2\cN(\zeta)[h,k]|\le24|h|\,|k|,
	\end{equation}
	where the constant $24$ is independent of the target dimension $m$.
	With this dimension-free estimate at hand, Taylor's formula with integral
	remainder reads
	\begin{align*}
		\cN(e+\beta^{-1}z)
		=\cN(e)+\beta^{-1}D\cN(e)z
		+\beta^{-2}\int_0^1(1-s)
		D^2\cN(e+s\beta^{-1}z)[z,z]\,ds.
	\end{align*}
	Denote the integral remainder in this expansion, which will be estimated
	uniformly in $\omega$, by
	\[
	\mathcal R_\beta(\omega)
	:=\beta^{-2}\int_0^1(1-s)
	D^2\cN(e+s\beta^{-1}\widehat P\omega)
	[\widehat P\omega,\widehat P\omega]\,ds.
	\]
	Combining \eqref{eq:P-hat-pointwise} with
	\eqref{eq:N-second-derivative-bound}, we obtain
	\[
	\|\mathcal R_\beta\|_{L^\infty(\Sn)}
	\le24\beta^{-2}\int_0^1(1-s)\,ds
	=12\beta^{-2}.
	\]
	Using $\cN(e)=e$, $D\cN(e)z=z-(e\cdot z)e$, and the positive scale
	invariance $\cN(\beta e+z)=\cN(e+\beta^{-1}z)$, we may now write
	\[
	\cN(\beta e+\widehat P\omega)
	=e+\beta^{-1}\bigl(\widehat P\omega
	-(e\cdot\widehat P\omega)e\bigr)
	+\mathcal R_\beta(\omega).
	\]
	Since the constant term is independent of $\omega$,
	\eqref{eq:sphere-coordinate-two} shows that each entry of its quadratic
	matrix coefficient vanishes.  Moreover, the linear term is odd under the
	transformation $\omega\mapsto-\omega$, whereas
	$\omega_i\omega_j-\delta_{ij}/n$ is even.  Its quadratic matrix coefficient
	therefore vanishes as well.  Consequently, applying $[\,\cdot\,]_2$ to the
	preceding identity leaves only the quadratic coefficient of the remainder
	\[
	[\cN(\beta e+\widehat P\omega)]_2=[\mathcal R_\beta]_2.
	\]
	The first estimate in \eqref{eq:coefficient-bounds-basic}, together with
	$\sigma(\Sn)=1$, implies that
	\[
	|[\mathcal R_\beta]_2|
	\le\kappa_n^{-1}\|\mathcal R_\beta\|_{L^1(\Sn)}
	\le\kappa_n^{-1}\|\mathcal R_\beta\|_{L^\infty(\Sn)}
	\le C_n\beta^{-2}.
	\]
	Consequently,
	\begin{equation}\label{eq:large-beta-projection}
		\left|[\cN(\beta e+\widehat P\omega)]_2\right|
		\le C_n\beta^{-2},\quad\text{for}\quad \beta\ge2.
	\end{equation}
	
	Next, we consider the case \(1\le\beta\le2\). The bounds $|\cN|\le1$, $\sigma(\Sn)=1$, and the
	second estimate in \eqref{eq:coefficient-bounds-basic} imply
	\[
	\left|[\cN(\beta e+\widehat P\omega)]_2\right|
	\le\kappa_n^{-1/2}.
	\]
	After the change of variables $dt=d\beta/\beta$, the estimates for
	$0<\beta\le1$, $1\le\beta\le2$, and $\beta\ge2$ show that the three
	corresponding contributions are controlled, respectively, by
	\[
	\int_0^1\beta^{1/2}\frac{d\beta}{\beta}=2,
	\qquad
	\int_1^2\frac{d\beta}{\beta}=\log2,
	\quad\text{and}\quad
	\int_2^\infty\beta^{-2}\frac{d\beta}{\beta}=\frac18.
	\]
	Since these quantities are finite and independent of $a$, $P$, and $m$,
	their sum leads to \eqref{eq:weighted-sphere-L1}.
\end{proof}

The final estimate treats the regime in which the quadratic term $Q_H$
dominates the perturbation $W$ by combining an elementary pointwise
inequality with integration over the sphere.

\begin{lemma}\label{lem:coercivity}
	Let $H\in\mathcal X_{n,m}$ and $W\in L^1(\Sn;\R^m)$.  Then
	\begin{equation}\label{eq:coercivity}
		\left\langle H,[\cN(Q_H+W)]_2\right\rangle
		\ge |H|-2\kappa_n^{-1}\|W\|_{L^1(\Sn)}.
	\end{equation}
\end{lemma}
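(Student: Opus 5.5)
By the first identity in \eqref{eq:coefficient-identities}, the left-hand side of \eqref{eq:coercivity} equals
\[
\kappa_n^{-1}\int_{\Sn}\cN(Q_H+W)\cdot Q_H\,d\sigma .
\]
The plan is to bound this integrand from below pointwise. The integrand has a fixed lower bound of the form $|Q_H|$ minus an error controlled by $|W|$. After integrating, the main term produces $|H|$ through the $L^2$ identity \eqref{eq:Q-inner-product}, and the error produces $2\kappa_n^{-1}\|W\|_{L^1}$.

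\emph{Pointwise inequality.} We will show that for all $x,y\in\R^m$,
\[
\cN(x+y)\cdot x\ge|x|-2|y|.
\]
Set $z=x+y$. If $z=0$, the left side is $0$ and $|x|=|y|$, so the inequality holds. If $z\ne0$, we use the identity $z\cdot\cN(z)=|z|$ together with $|\cN|\le1$:
\[
\cN(z)\cdot x=\cN(z)\cdot z-\cN(z)\cdot y\ge|z|-|y|\ge|x|-2|y|.
\]
Apply this with $x=Q_H(\omega)$ and $y=W(\omega)$, then integrate over the sphere. This gives
\[
\int_{\Sn}\cN(Q_H+W)\cdot Q_H\,d\sigma\ge\int_{\Sn}|Q_H|\,d\sigma-2\|W\|_{L^1(\Sn)} .
\]

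\emph{Main term.} Since $\|Q_H\|_{L^\infty(\Sn)}\le|H|$ by \eqref{eq:Q-inner-product}, we have $|Q_H|\ge|Q_H|^2/|H|$ pointwise, assuming $H\neq0$. The case $H=0$ is trivial, because then the left side of \eqref{eq:coercivity} vanishes and the right side is nonpositive. Again by \eqref{eq:Q-inner-product},
\[
\int_{\Sn}|Q_H|\,d\sigma\ge\frac{1}{|H|}\int_{\Sn}|Q_H|^2\,d\sigma=\kappa_n|H| .
\]
Multiplying the integrated inequality by $\kappa_n^{-1}$ then yields
\[
\langle H,[\cN(Q_H+W)]_2\rangle\ge|H|-2\kappa_n^{-1}\|W\|_{L^1(\Sn)},
\]
which is exactly \eqref{eq:coercivity}.

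The only point needing care is the last step, converting $\int|Q_H|$ into a multiple of $|H|$ with constant exactly $\kappa_n$. The trick $|Q_H|\ge|Q_H|^2/\|Q_H\|_\infty$ does this, and it works precisely because the $L^\infty$ bound and the $L^2$ identity in \eqref{eq:Q-inner-product} have matching normalizations. Measurability and integrability are immediate, since $W\in L^1$ and the integrand is bounded by $|Q_H|\le|H|$.
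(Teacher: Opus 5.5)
Your proposal is correct and follows essentially the same route as the paper. Both arguments use the pointwise bound $Q_H\cdot\cN(Q_H+W)\ge|Q_H|-2|W|$, derived from $z\cdot\cN(z)=|z|$ and $|\cN|\le1$ (including the case $Q_H+W=0$). Both then obtain $\int_{\Sn}|Q_H|\,d\sigma\ge\kappa_n|H|$ by combining the $L^2$ identity in \eqref{eq:Q-inner-product} with the bound $\|Q_H\|_{L^\infty(\Sn)}\le|H|$.
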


\begin{proof}
	Since the conclusion is immediate when $H=0$, we assume throughout the
	proof that $H\ne0$.  For almost every $\omega\in\Sn$, the 
	inequality 
	\begin{equation}\label{eq:coercivity-pointwise}
		Q_H\cdot\cN(Q_H+W)
		=|Q_H+W|-W\cdot\cN(Q_H+W)
		\ge|Q_H|-2|W|
	\end{equation}
remains valid even when $Q_H(\omega)+W(\omega)=0$.
	Here the triangle inequality gives $|Q_H+W|\ge|Q_H|-|W|$, while
	$|W\cdot\cN(Q_H+W)|\le|W|$ follows from $|\cN|\le1$ and remains valid
	when $Q_H+W=0$ under the convention $\cN(0)=0$.
	
	To obtain the required lower bound for $\int_{\Sn}|Q_H|\,d\sigma$, observe
	first that
	\[
	|Q_H(\omega)|
	\le\|Q_H\|_{L^\infty(\Sn)}.
	\]
	Integrating and using \eqref{eq:Q-inner-product}, we obtain
	\[
	\kappa_n|H|^2
	=\|Q_H\|_{L^2(\Sn)}^2
	\le\|Q_H\|_{L^\infty(\Sn)}
	\int_{\Sn}|Q_H|\,d\sigma
	\le|H|\int_{\Sn}|Q_H|\,d\sigma.
	\]
	The final inequality follows from
	$\|Q_H\|_{L^\infty(\Sn)}\le|H|$.  Since $H\ne0$, we may divide by $|H|$
	to obtain
	\[
	\int_{\Sn}|Q_H|\,d\sigma\ge\kappa_n|H|.
	\]
	Finally, integrating \eqref{eq:coercivity-pointwise} with respect to
	$d\sigma$, multiplying by $\kappa_n^{-1}$, and applying the first identity
	in \eqref{eq:coefficient-identities} yields \eqref{eq:coercivity}, thereby
	completing the proof.
\end{proof}

\section{Uniform bounds for the quadratic coefficients}
\label{sec:coefficient-bounds}

The three estimates established in the preceding section, when combined with
the differential formula \eqref{eq:qprime}, yield two forms of control
corresponding to the relative contributions of the affine and quadratic
parts.  When the affine part dominates both the quadratic term and the
bounded remainder, the projected right-hand side is integrable over all
subsequent logarithmic scales, which bounds the remaining variation of
$\qtwo(t)$.  A second estimate applies when the quadratic coefficient is
sufficiently large relative to the remainder, while the affine part is
sufficiently small relative to that coefficient.  Under these assumptions,
$|\qtwo(t)|$ has a strictly negative derivative. Although the hypotheses of these two estimates do not exhaust all possible
configurations at each scale, the proposition at the end of this section
exploits the exponential growth of the affine part and the continuity of
$\qtwo(t)$ to select a crossing scale at which the affine contribution
balances the quadratic coefficient together with the remainder bound, thereby
connecting the two estimates and establishing a uniform bound for the
quadratic coefficients.

Fix a solution satisfying Lemma~\ref{lem:decomposition}, so that
$\bv(0)\ne0$, $\qtwo(t)\to\qtwo_\infty$, and
\eqref{eq:decomposition} holds, and set
\begin{equation*}
	L(t,\omega)=e^{2t}\bv(0)+e^tD\bv(0)\omega,
	\qquad
	S(t)=e^{2t}|\bv(0)|+e^t\|D\bv(0)\|_F.
\end{equation*}
The componentwise Cauchy--Schwarz inequality implies that
\[
|D\bv(0)\omega|\le\|D\bv(0)\|_F
\qquad(\omega\in\Sn),
\]
and therefore $|L(t,\omega)|\le S(t)$.

Writing $\lambda=n+2$ throughout this section, we infer from the bounds in
\eqref{eq:coefficient-bounds-basic}, the pointwise estimate
$|\cN(\bA)|\le1$, and $\sigma(\Sn)=1$ that
\begin{equation}\label{eq:F2-uniform-bound}
	|\Ftwo(t)|
	\le\kappa_n^{-1/2}\|\cN(\bA(t,\cdot))\|_{L^2(\Sn)}
	\le\kappa_n^{-1/2}.
\end{equation}
Combining \eqref{eq:F2-uniform-bound} with the representation
\eqref{eq:qprime}, we obtain
\begin{equation}\label{eq:q-Lipschitz}
	|\qtwo(t+\tau)-\qtwo(t)|
	\le\int_t^{t+\tau}|\qtwo'(s)|\,ds
	\le\frac{\kappa_n^{-1/2}}{n+2}\,\tau,
	\quad\text{for}\quad \tau\ge0.
\end{equation}
The definition of $S$ also gives
\begin{equation}\label{eq:S-growth}
	e^\tau S(t)\le S(t+\tau)\le e^{2\tau}S(t),
	\quad\text{for}\quad \tau\ge0.
\end{equation}
In particular, since $\bv(0)\ne0$,
\[
S'(t)=2e^{2t}|\bv(0)|+e^t\|D\bv(0)\|_F>0,
\]
so $S$ is strictly increasing.

We first treat a scale at which the affine part dominates the quadratic term
and the bounded remainder.  The next lemma shows that this condition makes
$\Ftwo(t)$ integrable on all later scales and hence bounds the remaining
variation of $\qtwo(t)$.

\begin{lemma}
	\label{lem:remaining-change}
	Suppose that there exists $K_R\ge1$ such that
	\begin{equation}\label{eq:R-uniform-bound}
		\sup_{t\ge1}\|\mathbf R(t,\cdot)\|_{L^\infty(\Sn)}\le K_R.
	\end{equation}
	Then there is a constant $C_n$ such that, if
	\begin{equation}\label{eq:large-S-assumption}
		S(t_0)\ge |\qtwo(t_0)|+K_R,
		\quad \text{for some}\quad t_0\ge1,
	\end{equation}
	then
	\begin{equation}\label{eq:remaining-change-conclusion}
		|\qtwo_\infty-\qtwo(t_0)|\le C_n.
	\end{equation}
\end{lemma}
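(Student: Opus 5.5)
The plan is to reduce \eqref{eq:remaining-change-conclusion} to the integrability of $\Ftwo$ on $[t_0,\infty)$, and then to prove this integrability by comparing $\cN(\bA)$ with $\cN(L)$.

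\emph{Reduction.} Since $\qtwo(t)\to\qtwo_\infty$, I would integrate the representation \eqref{eq:qprime} from $t_0$ to $\infty$ and use Fubini:
\[
\qtwo_\infty-\qtwo(t_0)
=-\int_{t_0}^\infty\int_t^\infty e^{-\lambda(s-t)}\Ftwo(s)\,ds\,dt
=-\frac1\lambda\int_{t_0}^\infty\bigl(1-e^{-\lambda(s-t_0)}\bigr)\Ftwo(s)\,ds .
\]
Consequently $|\qtwo_\infty-\qtwo(t_0)|\le\lambda^{-1}\int_{t_0}^\infty|\Ftwo(s)|\,ds$, and it suffices to show that $\int_{t_0}^\infty|\Ftwo(s)|\,ds\le C_n$.

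\emph{Splitting $\bA$.} By \eqref{eq:decomposition}, for $s\ge t_0\ge1$ we have $\bA(s,\cdot)=L(s,\cdot)+E(s,\cdot)$ with $E(s,\cdot):=Q_{\qtwo(s)}+\mathbf R(s,\cdot)$. Using \eqref{eq:Q-inner-product} and \eqref{eq:R-uniform-bound},
\[
\|E(s,\cdot)\|_{L^\infty(\Sn)}\le|\qtwo(s)|+K_R .
\]
Linearity of $[\,\cdot\,]_2$ and the first bound in \eqref{eq:coefficient-bounds-basic} give
\[
|\Ftwo(s)|\le\bigl|[\cN(L(s,\cdot))]_2\bigr|+\kappa_n^{-1}\bigl\|\cN(L+E)-\cN(L)\bigr\|_{L^1(\Sn)} .
\]
The first term has integral over $\R$ at most $C_n$ by Lemma~\ref{lem:weighted-sphere-integrability}, applied with $a=\bv(0)$ and $P=D\bv(0)$. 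That lemma is uniform in $a$ and $P$, so this bound does not depend on the solution.

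\emph{Controlling the perturbation ratio.} For the second term I would use \eqref{eq:normalization-difference}, which bounds it by $C_n\varepsilon(s)^{1/2}$, where $\varepsilon(s):=\|E(s,\cdot)\|_{L^\infty(\Sn)}/S(s)$. To estimate $\varepsilon(s)$:
\begin{itemize}
\item The Lipschitz bound \eqref{eq:q-Lipschitz} gives $|\qtwo(s)|\le|\qtwo(t_0)|+C_n(s-t_0)$.
\item The growth bound \eqref{eq:S-growth} together with \eqref{eq:large-S-assumption} gives $S(s)\ge e^{s-t_0}\bigl(|\qtwo(t_0)|+K_R\bigr)$.
\end{itemize}
Since $|\qtwo(t_0)|+K_R\ge K_R\ge1$, the linear term can be absorbed after division. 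This yields
\[
\varepsilon(s)\le e^{-(s-t_0)}\bigl(1+C_n(s-t_0)\bigr).
\]
The second term is therefore integrable:
\[
\int_{t_0}^\infty e^{-(s-t_0)/2}\bigl(1+C_n(s-t_0)\bigr)^{1/2}\,ds\le C_n .
\]
Adding the two contributions gives $\int_{t_0}^\infty|\Ftwo(s)|\,ds\le C_n$, and the reduction above then yields \eqref{eq:remaining-change-conclusion}.

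\emph{Main obstacle.} The delicate step is keeping the perturbation ratio $\varepsilon(s)$ decaying uniformly in the data, because $\qtwo(s)$ may grow linearly after $t_0$. This is exactly where the normalization $K_R\ge1$ is needed, so that the linear growth is absorbed into the factor $1+C_n(s-t_0)$. It is also where the exponential lower growth of $S$ from \eqref{eq:S-growth} beats that linear growth. Everything else follows directly from the earlier lemmas.
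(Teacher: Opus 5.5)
Your proposal is correct and follows the paper's proof essentially step for step. It uses the same splitting $\bA=L+E$, the same decay bound $\|E\|_{L^\infty}/S\le C_n(1+\tau)e^{-\tau}$ from \eqref{eq:q-Lipschitz}, \eqref{eq:S-growth} and $K_R\ge1$, Lemma~\ref{lem:weighted-sphere-integrability} for $[\cN(L)]_2$, and \eqref{eq:qprime} to pass from $\int_{t_0}^\infty|\Ftwo|$ to $|\qtwo_\infty-\qtwo(t_0)|$. The only cosmetic difference is that your exact Fubini identity is justified only after integrability of $\Ftwo$ is known, whereas the paper applies Tonelli to $\int_{t_0}^\infty|\qtwo'(t)|\,dt$ and gets the same bound $\lambda^{-1}\int_{t_0}^\infty|\Ftwo|$ with no justification needed.
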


\begin{proof}
	Recalling $E(t,\omega)=Q_{\qtwo(t)}(\omega)+\mathbf R(t,\omega)$, the identity
	\eqref{eq:Q-inner-product}, the Lipschitz
	estimate \eqref{eq:q-Lipschitz}, the growth bound \eqref{eq:S-growth}, and
	the hypothesis \eqref{eq:large-S-assumption} imply, for $t=t_0+\tau$, that
	\begin{align*}
		\frac{\|E(t,\cdot)\|_{L^\infty(\Sn)}}{S(t)}
		&\le\frac{|\qtwo(t)|+K_R}{S(t)}\\
		&\le\frac{|\qtwo(t_0)|+
			\dfrac{\kappa_n^{-1/2}}{n+2}\tau+K_R}
		{(|\qtwo(t_0)|+K_R)e^\tau}\\
		&\le C_n(1+\tau)e^{-\tau}.
	\end{align*}
	Since \eqref{eq:decomposition} gives $\bA=L+E$, the linearity of the matrix
	coefficient implies that
	\[
	\Ftwo(t)-[\cN(L(t,\cdot))]_2
	=[\cN(L(t,\cdot)+E(t,\cdot))-\cN(L(t,\cdot))]_2.
	\]
	Using the first estimate in \eqref{eq:coefficient-bounds-basic} to pass
	from the $L^1(\Sn)$ difference to its matrix coefficient, and then applying
	\eqref{eq:normalization-difference}, we find for $t=t_0+\tau$ that
	\begin{align}
		\left|\Ftwo(t)-[\cN(L(t,\cdot))]_2\right|
		&\le\kappa_n^{-1}
		\|\cN(L(t,\cdot)+E(t,\cdot))-\cN(L(t,\cdot))\|_{L^1(\Sn)}\notag\\
		&\le C_n\min\left\{1,
		\left(\frac{\|E(t,\cdot)\|_{L^\infty(\Sn)}}{S(t)}\right)^{1/2}\right\}\notag\\
		&\le C_n\min\{1,(1+\tau)^{1/2}e^{-\tau/2}\}. \label{eq:F2-linear-difference}
	\end{align}
	The right-hand side is integrable on $(0,\infty)$, since it is bounded on
	$(0,1)$, while
	$(1+\tau)^{1/2}e^{-\tau/2}\le(1+\tau)e^{-\tau/2}$ for $\tau\ge1$.
	Moreover, Lemma~\ref{lem:weighted-sphere-integrability} ensures that
	\[
	\int_{t_0}^\infty|[\cN(L(t,\cdot))]_2|\,dt
	\le\int_{-\infty}^\infty|[\cN(L(t,\cdot))]_2|\,dt
	\le C_n.
	\]
	Combining this bound with the integrable majorant in
	\eqref{eq:F2-linear-difference}, we obtain
	\begin{equation}\label{eq:F2-integrable-after-t0}
		\int_{t_0}^\infty|\Ftwo(t)|\,dt\le C_n.
	\end{equation}
	The representation \eqref{eq:qprime}, estimate
	\eqref{eq:F2-integrable-after-t0}, and Tonelli's theorem then give
	\begin{align*}
		\int_{t_0}^\infty|\qtwo'(t)|\,dt
		&\le\int_{t_0}^\infty\int_t^\infty
		e^{-\lambda(s-t)}|\Ftwo(s)|\,ds\,dt\\
		&=\int_{t_0}^\infty|\Ftwo(s)|
		\left(\int_{t_0}^s e^{-\lambda(s-t)}\,dt\right)ds\\
		&=\int_{t_0}^\infty|\Ftwo(s)|
		\frac{1-e^{-\lambda(s-t_0)}}{\lambda}\,ds\\
		&\le\frac1\lambda\int_{t_0}^\infty|\Ftwo(s)|\,ds
		\le C_n.
	\end{align*}
	Consequently,
	\[
	|\qtwo_\infty-\qtwo(t_0)|
	\le\int_{t_0}^\infty|\qtwo'(t)|\,dt\le C_n,
	\]
	which proves \eqref{eq:remaining-change-conclusion}.
\end{proof}

Whereas Lemma~\ref{lem:remaining-change} treats the range in which $S$ is
large, the following lemma supplies the complementary derivative estimate
when the quadratic coefficient dominates both $S$ and the bounded remainder.

\begin{lemma}
	\label{lem:descent}
	Suppose that $K_R\ge1$ and \eqref{eq:R-uniform-bound} holds.  Then there exist
	constants
	$\varepsilon_n,c_{0,n}>0$ and $C_n^*\ge1$ depending only on $n$, such that for any $t\ge1$, the inequalities 
	\begin{equation}\label{eq:decrease-assumptions}
		|\qtwo(t)|\ge(K_R+1)C_n^*,
		\qquad S(t)\le\varepsilon_n|\qtwo(t)|
	\end{equation}
	imply
	\begin{equation}\label{eq:descent}
		\frac d{dt}|\qtwo(t)|\le-c_{0,n}.
	\end{equation}
\end{lemma}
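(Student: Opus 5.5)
Since $\qtwo\in C^{1,1}_{\mathrm{loc}}$ and $|\qtwo(t)|>0$ under \eqref{eq:decrease-assumptions}, we have
\[
\frac d{dt}|\qtwo(t)|=\frac{\langle\qtwo(t),\qtwo'(t)\rangle}{|\qtwo(t)|}
=-\int_t^\infty e^{-\lambda(s-t)}\frac{\langle\qtwo(t),\Ftwo(s)\rangle}{|\qtwo(t)|}\,ds
\]
by \eqref{eq:qprime}. It therefore suffices to show that $\langle\widehat{\qtwo},\Ftwo(s)\rangle\ge\frac12$ for $s\in[t,t+\tau_n]$, where $\widehat{\qtwo}:=\qtwo(t)/|\qtwo(t)|$ and $\tau_n$ depends only on $n$. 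For $s>t+\tau_n$ we use the crude bound $|\Ftwo(s)|\le\kappa_n^{-1/2}$ from \eqref{eq:F2-uniform-bound}. This gives
\[
\frac d{dt}|\qtwo(t)|\le-\frac{1-e^{-\lambda\tau_n}}{2\lambda}+\frac{\kappa_n^{-1/2}}{\lambda}e^{-\lambda\tau_n}.
\]
We fix $\tau_n$ large enough that the right-hand side is at most $-\frac1{4\lambda}=:-c_{0,n}$.

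For the lower bound at a later scale $s=t+\tau$ with $0\le\tau\le\tau_n$, write $\bA(s)=Q_{\qtwo(s)}+W(s)$ with $W=L(s,\cdot)+\mathbf R(s,\cdot)$, using \eqref{eq:decomposition}. Let $H:=\qtwo(s)$. Lemma~\ref{lem:coercivity} gives
\[
\langle\qtwo(s),\Ftwo(s)\rangle\ge|\qtwo(s)|-2\kappa_n^{-1}\bigl(S(s)+K_R\bigr),
\]
because $\|L(s)\|_{L^\infty}\le S(s)$. By \eqref{eq:S-growth}, $S(s)\le e^{2\tau_n}S(t)\le e^{2\tau_n}\varepsilon_n|\qtwo(t)|$. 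By \eqref{eq:q-Lipschitz}, $\bigl||\qtwo(s)|-|\qtwo(t)|\bigr|\le\kappa_n^{-1/2}\tau_n/\lambda$, and also $|\qtwo(s)-\qtwo(t)|\le\kappa_n^{-1/2}\tau_n/\lambda$. It remains to pass from $\qtwo(s)$ to $\qtwo(t)$:
\[
\langle\qtwo(t),\Ftwo(s)\rangle\ge\langle\qtwo(s),\Ftwo(s)\rangle-|\qtwo(s)-\qtwo(t)|\,\kappa_n^{-1/2}.
\]
Dividing by $|\qtwo(t)|$ yields
\[
\langle\widehat{\qtwo},\Ftwo(s)\rangle\ge1-\frac{C_n\tau_n}{|\qtwo(t)|}-2\kappa_n^{-1}e^{2\tau_n}\varepsilon_n-\frac{2\kappa_n^{-1}K_R}{|\qtwo(t)|}.
\]
With $\tau_n$ already fixed, choose $\varepsilon_n$ so that $2\kappa_n^{-1}e^{2\tau_n}\varepsilon_n\le\frac18$. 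Then choose $C_n^*$ so large that $|\qtwo(t)|\ge(K_R+1)C_n^*$ makes the two remaining terms each at most $\frac18$; this works because $K_R\ge1$, so $(K_R+1)C_n^*\ge C_n^*$ handles the $\tau_n$ term and $K_R/|\qtwo(t)|\le1/C_n^*$ handles the other. The result is $\langle\widehat{\qtwo},\Ftwo(s)\rangle\ge\frac58\ge\frac12$. Note that the hypotheses are needed only at the single time $t$. Propagation to $s\in[t,t+\tau_n]$ comes from the Lipschitz bound and the growth bound \eqref{eq:S-growth}, which is why the order of choosing constants matters.

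The main obstacle is this ordering. $\tau_n$ must be fixed first, using only the tail bound. Then $\varepsilon_n$ must absorb the factor $e^{2\tau_n}$, and finally $C_n^*$ must absorb both the Lipschitz drift and $K_R$. One should also check that $|\qtwo|$ is differentiable at $t$. This holds since $\qtwo\in C^1$ and $\qtwo(t)\neq0$, so the derivative is interpreted pointwise.
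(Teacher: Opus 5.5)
Your proposal is correct and follows the paper's proof essentially step for step. You fix the window length (the paper's $D_n$) first from the tail bound \eqref{eq:F2-uniform-bound}, then choose $\varepsilon_n$ to absorb the factor $e^{2D_n}$ from \eqref{eq:S-growth}, and finally choose $C_n^*$ to absorb the Lipschitz drift and $K_R$. You apply Lemma~\ref{lem:coercivity} at each later scale $t+\tau$ with $H=\qtwo(t+\tau)$ and $W=L+\mathbf R$, and transfer the bound back to $\qtwo(t)$ before splitting the integral in \eqref{eq:qprime}, exactly as the paper does; the only differences are cosmetic (you normalize by $|\qtwo(t)|$ and take $c_{0,n}=1/(4\lambda)$ rather than $(1-e^{-\lambda D_n})/(4\lambda)$).
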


\begin{proof}
	We first choose $D_n>1$ sufficiently large that
	\begin{equation}\label{eq:D-choice}
		\kappa_n^{-1/2}e^{-\lambda D_n}
		\le\frac14(1-e^{-\lambda D_n}).
	\end{equation}
	Having fixed $D_n$, we choose
	\begin{equation}\label{eq:epsilon-choice}
		0<\varepsilon_n\le
		\frac{\kappa_n}{8e^{2D_n}}.
	\end{equation}
	For $0\le\tau\le D_n$, inequalities \eqref{eq:q-Lipschitz},
	\eqref{eq:S-growth}, and \eqref{eq:decrease-assumptions} yield
	\[
	|\qtwo(t+\tau)-\qtwo(t)|
	\le\frac{\kappa_n^{-1/2}}{n+2}D_n,
	\quad\text{and}\quad
	S(t+\tau)\le e^{2D_n}\varepsilon_n|\qtwo(t)|.
	\]
	Applying Lemma~\ref{lem:coercivity} at $t+\tau$ with
	$H=\qtwo(t+\tau)$ and
	$W=L(t+\tau,\cdot)+\mathbf R(t+\tau,\cdot)$, and observing that
	$\sigma(\Sn)=1$ implies
	$\|W\|_{L^1(\Sn)}\le S(t+\tau)+K_R$, we obtain
	\[
	\langle\qtwo(t+\tau),\Ftwo(t+\tau)\rangle
	\ge|\qtwo(t+\tau)|-2\kappa_n^{-1}
	\bigl(S(t+\tau)+K_R\bigr).
	\]
	Furthermore, \eqref{eq:q-Lipschitz} gives
	\begin{align*}
		|\qtwo(t+\tau)|
		&\ge|\qtwo(t)|-\frac{\kappa_n^{-1/2}}{n+2}D_n,
	\end{align*}
	while combining \eqref{eq:q-Lipschitz} with
	\eqref{eq:F2-uniform-bound}, we have
	\begin{align*}
		|\langle\qtwo(t)-\qtwo(t+\tau),\Ftwo(t+\tau)\rangle|
		&\le|\qtwo(t)-\qtwo(t+\tau)|\,|\Ftwo(t+\tau)|\\
		&\le\frac{\kappa_n^{-1}}{n+2}D_n.
	\end{align*}
	The inner product can be decomposed as
	\begin{align*}
		\langle\qtwo(t),\Ftwo(t+\tau)\rangle
		=\langle\qtwo(t+\tau),\Ftwo(t+\tau)\rangle +\langle\qtwo(t)-\qtwo(t+\tau),\Ftwo(t+\tau)\rangle.
	\end{align*}
	Combining the preceding inequalities, we obtain
	\begin{align}
		\langle\qtwo(t),\Ftwo(t+\tau)\rangle
		&\ge |\qtwo(t)|-\frac{\kappa_n^{-1/2}}{n+2}D_n
		-2\kappa_n^{-1}
		\bigl(e^{2D_n}\varepsilon_n|\qtwo(t)|+K_R\bigr)
		-\frac{\kappa_n^{-1}}{n+2}D_n\notag\\
		&\ge\frac34|\qtwo(t)|
		-\frac{D_n}{n+2}(\kappa_n^{-1/2}+\kappa_n^{-1})
		-2\kappa_n^{-1}K_R.\label{eq:positive-fixed-interval-pre}
	\end{align}
	The second inequality follows from \eqref{eq:epsilon-choice}, since
	\[
	2\kappa_n^{-1}e^{2D_n}\varepsilon_n|\qtwo(t)|
	\le\frac14|\qtwo(t)|.
	\]
	Once $D_n$ and $\varepsilon_n$ have been fixed, we choose $C_n^*$
	sufficiently large so that, whenever
	$|\qtwo(t)|\ge C_n^*(K_R+1)$, one has
	\[
	\frac{D_n}{n+2}
	\bigl(\kappa_n^{-1/2}+\kappa_n^{-1}\bigr)
	+2\kappa_n^{-1}K_R
	\le\frac14|\qtwo(t)|.
	\]
	Combining this bound with
	\eqref{eq:positive-fixed-interval-pre}, we obtain
	\begin{equation}\label{eq:positive-fixed-interval}
		\langle\qtwo(t),\Ftwo(t+\tau)\rangle
		\ge\frac12|\qtwo(t)|,
		\qquad 0\le\tau\le D_n.
	\end{equation}
	
	Rewriting \eqref{eq:qprime} with $s=t+\tau$ gives
	\[
	\qtwo'(t)=-\int_0^\infty e^{-\lambda\tau}\Ftwo(t+\tau)\,d\tau.
	\]
	Taking the inner product of this formula with $\qtwo(t)$, splitting the
	integral at $D_n$, and applying \eqref{eq:positive-fixed-interval} to the
	first part and \eqref{eq:F2-uniform-bound} to the second, we find that
	\begin{align*}
		\langle\qtwo(t),\qtwo'(t)\rangle
		&\le-\frac{|\qtwo(t)|}{2}
		\int_0^{D_n}e^{-\lambda\tau}\,d\tau
		+|\qtwo(t)|\kappa_n^{-1/2}
		\int_{D_n}^\infty e^{-\lambda\tau}\,d\tau\\
		&=-\frac{|\qtwo(t)|}{2\lambda}(1-e^{-\lambda D_n})
		+\frac{|\qtwo(t)|\kappa_n^{-1/2}}{\lambda}e^{-\lambda D_n}\\
		&\le-\frac{|\qtwo(t)|}{4\lambda}(1-e^{-\lambda D_n}),
	\end{align*}
	where the last inequality follows from \eqref{eq:D-choice}.  Since
	\eqref{eq:decrease-assumptions} ensures that
	$|\qtwo(t)|\ge(K_R+1)C_n^*>0$, the chain rule gives
	\[
	\frac d{dt}|\qtwo(t)|
	=\frac{\langle\qtwo(t),\qtwo'(t)\rangle}{|\qtwo(t)|}
	\le-\frac{1-e^{-\lambda D_n}}{4\lambda}.
	\]
	Thus \eqref{eq:descent} holds with
	\[
	c_{0,n}=\frac{1-e^{-\lambda D_n}}{4\lambda}>0.
	\]
\end{proof}

Lemmas~\ref{lem:remaining-change} and~\ref{lem:descent} yield the pointwise
Hessian bound at any center where the solution is nonzero.

\begin{proposition}[Uniform bound at a nonzero center]
	\label{prop:nonzero-center}
	Let $\bv$ be a weak solution of \eqref{eq:system} in $B_2\subset\R^n$,
	suppose
	$\|\bv\|_{L^\infty(B_2)}\le M$, and assume $\bv(0)\ne0$.  Then
	\begin{equation}\label{eq:nonzero-center-bound}
		|D^2\bv(0)|\le C_n(M+1).
	\end{equation}
\end{proposition}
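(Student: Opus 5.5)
By \eqref{eq:q-Hessian-relation-2} it suffices to show $|\qtwo_\infty|\le C_n(M+1)$. I would set $K_R:=C_n(M+1)\ge1$, using the constant from \eqref{eq:R-properties}, so that \eqref{eq:R-uniform-bound} holds. I would also fix the constants $\varepsilon_n,c_{0,n},C_n^*$ of Lemma~\ref{lem:descent}. The target is a bound of the form $|\qtwo_\infty|\le C_n K_R$.

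The strategy is to find a crossing scale $t_*\ge1$ at which $|\qtwo(t_*)|\le C_nK_R$ and at which Lemma~\ref{lem:remaining-change} applies. Define $g(t):=S(t)-|\qtwo(t)|-K_R$. This function is continuous, and it tends to $+\infty$ because $S(t)\ge e^{2t}|\bv(0)|$ while $|\qtwo(t)|$ grows at most linearly by \eqref{eq:q-Lipschitz}. There are then two cases.

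\emph{Case 1: $g(1)\ge0$.} Take $t_*=1$. Here $|\qtwo(1)|\le|\qtwo(1)|$ trivially, and I need a separate bound at $t=1$. I would get it from \eqref{eq:coefficient-bounds-basic}: since $|\bA(1,\omega)|\le e^2M$, we have $|\qtwo(1)|\le\kappa_n^{-1/2}e^2M$.

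\emph{Case 2: $g(1)<0$.} Let $t_*$ be the first zero of $g$ after $1$. Then $S(t_*)=|\qtwo(t_*)|+K_R$, and $g<0$ on $[1,t_*)$.

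In either case Lemma~\ref{lem:remaining-change} gives $|\qtwo_\infty|\le|\qtwo(t_*)|+C_n$. It therefore remains, in Case 2, to bound $|\qtwo(t_*)|$.

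This is the main step. Set $\Lambda:=\max\{(K_R+1)C_n^*,\ 2K_R/\varepsilon_n\}$ and suppose $|\qtwo(t_*)|>2\Lambda$. I would look backward from $t_*$. Since $S$ grows at least like $e^\tau$ by \eqref{eq:S-growth} and $\qtwo$ is uniformly Lipschitz, for $t\in[t_*-T,t_*]$ with $T=T_n$ large we get
\[
S(t)\le e^{-(t_*-t)}S(t_*)=e^{-(t_*-t)}\bigl(|\qtwo(t_*)|+K_R\bigr)\le\varepsilon_n|\qtwo(t)|,
\]
as long as $t_*-t\ge T_n$ and $|\qtwo|$ stays comparable to $|\qtwo(t_*)|$. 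On the other hand, if $|\qtwo(t)|\ge\Lambda$ and $S(t)\le\varepsilon_n|\qtwo(t)|$, Lemma~\ref{lem:descent} says $|\qtwo|$ is strictly decreasing there.

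The difficulty is the window $[t_*-T_n,t_*]$, where $S$ may still be comparable to $|\qtwo|$. On this window the Lipschitz bound \eqref{eq:q-Lipschitz} only allows $|\qtwo|$ to change by $C_nT_n$, which is harmless. So consider $t_1:=t_*-T_n$. There $|\qtwo(t_1)|\ge|\qtwo(t_*)|-C_nT_n$, which is large, and $S(t_1)\le e^{-T_n}(|\qtwo(t_*)|+K_R)\le\varepsilon_n|\qtwo(t_1)|$. Now run time backward from $t_1$. On the maximal interval $[t_2,t_1]$ on which $|\qtwo|\ge\Lambda$ and $S\le\varepsilon_n|\qtwo|$, Lemma~\ref{lem:descent} shows that $|\qtwo|$ only increases as $t$ decreases, while $S$ decreases. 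Hence the hypotheses persist, by continuity, all the way down to $t=1$, which forces $t_2=1$ (or $t_1<1$, in which case $t_*\le 1+T_n$ is handled directly). This gives
\[
|\qtwo(1)|\ge|\qtwo(t_1)|+c_{0,n}(t_1-1),
\]
and in particular $|\qtwo(t_1)|\le|\qtwo(1)|\le C_nM$. If instead $t_*\le1+T_n$, the Lipschitz bound gives $|\qtwo(t_*)|\le|\qtwo(1)|+C_nT_n$ directly.

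In all cases $|\qtwo(t_*)|\le C_n(M+1)+C_nK_R$, which gives the proposition. The step I expect to need the most care is the bookkeeping in choosing $T_n$, so that the condition $S\le\varepsilon_n|\qtwo|$ genuinely holds at $t_1$ and the backward continuation is valid.
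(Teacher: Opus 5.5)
Your proposal is correct and follows the paper's proof essentially step for step. It uses the same crossing scale where $S=|\qtwo|+K_R$ and the same backward window of fixed length (your $T_n$, the paper's $D_0$ with $e^{-D_0}\le\varepsilon_n/8$), and it uses Lemma~\ref{lem:descent} to show that $|\qtwo|$ before the window cannot exceed $|\qtwo(1)|$; the paper reaches this via a maximum point of $|\qtwo|$ on $[1,\tau_0]$, while your backward continuation is an equivalent argument. The only bookkeeping to settle is that your threshold $2\Lambda$ must also dominate the Lipschitz drift $\frac{\kappa_n^{-1/2}}{n+2}T_n$ over the window, as the paper's threshold $4(K_R+\frac{\kappa_n^{-1/2}}{n+2}D_0+\mu_*)$ does explicitly, so that $|\qtwo(t_1)|\ge\Lambda$ and $S(t_1)\le\varepsilon_n|\qtwo(t_1)|$ genuinely follow from $|\qtwo(t_*)|>2\Lambda$.
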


\begin{proof}
	Apply Lemma~\ref{lem:decomposition}, and set
	\begin{equation*}
		K_R:=\max\left\{1,
		\sup_{t\ge1}\|\mathbf R(t,\cdot)\|_{L^\infty(\Sn)}\right\}.
	\end{equation*}
	Then $K_R\le C_n(M+1)$ by \eqref{eq:R-properties}, after increasing the
	constant $C_n$ if necessary, while the definition also ensures that
	$K_R\ge1$ and that \eqref{eq:R-uniform-bound} holds.  At $t=1$, the bounds in
	\eqref{eq:coefficient-bounds-basic}, together with the definition of $\bA$,
	give
	\begin{equation}\label{eq:q-start}
		|\qtwo(1)|\le C_n M.
	\end{equation}
	The assumption $\bv(0)\ne0$ yields $S(t)\to\infty$, whereas
	$\qtwo(t)\to\qtwo_\infty$.  Consequently, if
	\[
	S(1)\ge |\qtwo(1)|+K_R,
	\]
	Lemma~\ref{lem:remaining-change}, combined with \eqref{eq:q-start},
	immediately imply
	$|\qtwo_\infty|\le C_n(M+1)$.
	
	Otherwise, by continuity and the intermediate value theorem, there exists
	$t_0>1$ such that
	\begin{equation}\label{eq:S-equality}
		S(t_0)=|\qtwo(t_0)|+K_R.
	\end{equation}
	Indeed, in the case under consideration, the continuous function
	$S(t)-|\qtwo(t)|-K_R$ is negative at $t=1$ and tends to $+\infty$ as
	$S(t)\to\infty$ while $\qtwo(t)\to\qtwo_\infty$.  Let $\varepsilon_n$ and
	$$
	\mu_*:=(K_R+1)C_n^*
	$$
	be the quantities furnished by Lemma~\ref{lem:descent}, and choose $D_0>0$
	large enough that
	\begin{equation}\label{eq:D0-choice}
		e^{-D_0}\le\frac{\varepsilon_n}{8}.
	\end{equation}
	If $t_0\le1+D_0$, then \eqref{eq:q-Lipschitz} and
	\eqref{eq:q-start} yield $|\qtwo(t_0)|\le C_n(M+1)$.
	
	It therefore remains to consider $t_0>1+D_0$, for which we set
	$\tau_0=t_0-D_0$.  If
	\begin{equation}\label{eq:temporary-small-q}
		|\qtwo(t_0)|<4\left(K_R
		+\frac{\kappa_n^{-1/2}}{n+2}D_0+\mu_*\right),
	\end{equation}
	then the bounds $K_R,\mu_*\le C_n(M+1)$ show that
	$|\qtwo(t_0)|\le C_n(M+1)$.
	
	We may thus assume that the reverse of \eqref{eq:temporary-small-q} holds,
	and \eqref{eq:q-Lipschitz} then yields
	\begin{equation}\label{eq:q-backward-large}
		|\qtwo(\tau_0)|
		\ge|\qtwo(t_0)|-\frac{\kappa_n^{-1/2}}{n+2}D_0
		\ge\frac34|\qtwo(t_0)|\ge\mu_*.
	\end{equation}
	Combining \eqref{eq:S-growth}, \eqref{eq:S-equality},
	\eqref{eq:D0-choice}, and \eqref{eq:q-backward-large}, we also find that
	\begin{align}
		S(\tau_0)
		&\le e^{-D_0}S(t_0)
		=e^{-D_0}(|\qtwo(t_0)|+K_R)\notag\\
		&\le\frac{5\varepsilon_n}{32}|\qtwo(t_0)|
		\le\frac{5\varepsilon_n}{24}|\qtwo(\tau_0)|
		<\frac{\varepsilon_n}{4}|\qtwo(\tau_0)|,
		\label{eq:S-backward-small}
	\end{align}
	where the reverse of \eqref{eq:temporary-small-q} gives
	$K_R\le|\qtwo(t_0)|/4$.
	
	If $|\qtwo(\tau_0)|\le|\qtwo(1)|$, then
	\eqref{eq:q-Lipschitz} and \eqref{eq:q-start} give
	\[
	|\qtwo(t_0)|
	\le|\qtwo(\tau_0)|+\frac{\kappa_n^{-1/2}}{n+2}D_0
	\le|\qtwo(1)|+\frac{\kappa_n^{-1/2}}{n+2}D_0
	\le C_n(M+1),
	\]
	which is the required bound.
	
	To exclude the remaining possibility, suppose that
	$|\qtwo(\tau_0)|>|\qtwo(1)|$, in which case continuity of
	$s\mapsto|\qtwo(s)|$ ensures that it attains its maximum on $[1,\tau_0]$ at
	some $t_*$.  The strict inequality just assumed gives $t_*>1$, while
	the estimate \eqref{eq:q-backward-large}, the monotonicity of $S$, and
	\eqref{eq:S-backward-small} together yield
	\[
	|\qtwo(t_*)|\ge|\qtwo(\tau_0)|\ge\mu_*,
	\quad\text{and}\quad
	S(t_*)\le S(\tau_0)
	<\frac{\varepsilon_n}{4}|\qtwo(\tau_0)|
	\le\varepsilon_n|\qtwo(t_*)|.
	\]
	For every sufficiently small $h>0$, maximality implies
	\[
	\frac{|\qtwo(t_*)|-|\qtwo(t_*-h)|}{h}\ge0.
	\]
	As established after \eqref{eq:q-ODE}, $\qtwo\in C^{1,1}_{\mathrm{loc}}$,
	and since $|\qtwo(t_*)|>0$, the function $s\mapsto|\qtwo(s)|$ is therefore
	differentiable at $t_*$.  Letting $h\rightarrow0$ in the last inequality
	gives
	\[
	\left.\frac d{dt}|\qtwo(t)|\right|_{t=t_*}\ge0.
	\]
	Since Lemma~\ref{lem:descent} states that the same derivative is at most
	$-c_{0,n}<0$, the nonnegative derivative obtained above is impossible,
	which excludes the remaining possibility and yields
	$|\qtwo(\tau_0)|\le|\qtwo(1)|$.
	
	Therefore, in every case, we have
	\[
	|\qtwo(t_0)|
	\le C_n(M+1).
	\]
	Applying Lemma~\ref{lem:remaining-change} at the value $t_0$ determined by
	\eqref{eq:S-equality}, we conclude that
	\begin{equation*}
		|\qtwo_\infty|\le C_n(M+1).
	\end{equation*}
	Finally, \eqref{eq:q-Hessian-relation-2} establishes
	\eqref{eq:nonzero-center-bound}.
\end{proof}

\section{Proof of the optimal regularity estimate}\label{sec:final-proof}

Proposition~\ref{prop:nonzero-center} controls the Hessian on the open set
where the solution is nonzero.  The zero set is handled by the following
standard Sobolev result (see \cite[Sec.~4.2.2]{EG15}).

\begin{lemma}
	\label{lem:sobolev-level-set}
	Let $\Omega\subset\R^n$ be an open set, let $u\in W^{1,p}_{\mathrm{loc}}(\Omega)$ for some
	$p\ge1$, and let $c\in\R$.  Then
	\begin{equation}\label{eq:level-set-gradient-zero}
		Du=0\quad\text{almost everywhere on }\{u=c\}.
	\end{equation}
\end{lemma}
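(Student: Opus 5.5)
The plan is to prove the statement for the case $p=1$ in full generality, since $W^{1,p}_{\mathrm{loc}}\subset W^{1,1}_{\mathrm{loc}}$. Replacing $u$ by $u-c$, we may assume $c=0$. The argument proceeds through truncation and the chain rule for Lipschitz functions composed with Sobolev functions.

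\emph{Step 1: positive part.} We first recall that for $u\in W^{1,1}_{\mathrm{loc}}(\Omega)$ the positive part satisfies $u^+\in W^{1,1}_{\mathrm{loc}}(\Omega)$ with
\[
Du^+=Du\,\chi_{\{u>0\}}\quad\text{a.e.}
\]
To prove this, we approximate $s\mapsto s^+$ by the $C^1$ functions $F_\varepsilon(s)=\bigl(\sqrt{s^2+\varepsilon^2}-\varepsilon\bigr)\chi_{\{s>0\}}$. Each $F_\varepsilon$ has bounded derivative $F_\varepsilon'(s)=\frac{s}{\sqrt{s^2+\varepsilon^2}}\chi_{\{s>0\}}$. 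The classical $C^1$ chain rule gives $D(F_\varepsilon\circ u)=F_\varepsilon'(u)Du$. As $\varepsilon\to0$, we have $F_\varepsilon(u)\to u^+$ in $L^1_{\mathrm{loc}}$ and $F_\varepsilon'(u)Du\to\chi_{\{u>0\}}Du$ in $L^1_{\mathrm{loc}}$ by dominated convergence, with $|Du|$ as the dominating function. Passing to the limit in the weak-derivative identity yields the claim.

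\emph{Step 2: negative part and conclusion.} Applying Step 1 to $-u$ gives $Du^-=-Du\,\chi_{\{u<0\}}$, where $u^-=(-u)^+$. Since $u=u^+-u^-$, we obtain
\[
Du=Du^+-Du^-=Du\,\chi_{\{u>0\}}+Du\,\chi_{\{u<0\}}=Du\,\chi_{\{u\ne0\}}\quad\text{a.e.}
\]
Hence $Du=0$ almost everywhere on $\{u=0\}$, which is \eqref{eq:level-set-gradient-zero}.

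\emph{Main obstacle.} The only delicate point is Step 1. We must justify the convergence $F_\varepsilon'(u)Du\to\chi_{\{u>0\}}Du$ pointwise almost everywhere. On $\{u>0\}$ this is clear. On $\{u\le0\}$ both sides vanish identically by the definition of $F_\varepsilon$. This second fact is why the cut-off factor $\chi_{\{s>0\}}$ is built into $F_\varepsilon$, rather than using the symmetric approximation $\sqrt{s^2+\varepsilon^2}$.

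Once this is established, the usage later in the paper is to apply the lemma twice on the zero set of $\bv$. The first application is to $u=v_\alpha$, giving $Dv_\alpha=0$ a.e. there. The second is to $u=\partial_iv_\alpha\in W^{1,p}_{\mathrm{loc}}$ on its own zero set, giving $D^2\bv=0$ a.e. on $\{\bv=0\}$.
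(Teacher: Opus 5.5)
Your argument is correct and is essentially the standard proof in \cite[Sec.~4.2.2]{EG15}, which the paper cites instead of proving the lemma: the same $C^1$ truncation $F_\varepsilon$, the chain rule, and dominated convergence give $Du^+=\chi_{\{u>0\}}Du$, and then $Du=Du^+-Du^-$. Your reductions to $p=1$ and $c=0$ are legitimate in the local setting (they even cover $p=\infty$), and your description of the two-fold application on $\{\bv=0\}$ matches how the paper uses the lemma.
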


Applying the conclusion \eqref{eq:level-set-gradient-zero} in
Lemma~\ref{lem:sobolev-level-set} first to every component of the solution
and then to each first derivative shows that the Hessian vanishes almost
everywhere on the zero set.  On its complement, a rescaling of
Proposition~\ref{prop:nonzero-center} supplies the uniform estimate.

\begin{proof}[Proof of Theorem~\ref{thm:main}]
	Fix $x\in B_R(x_0)$ with $\bu(x)\ne0$ and define
	\begin{equation*}
		\bv(y)=\frac4{R^2}\bu\left(x+\frac R2y\right),
		\qquad y\in B_2.
	\end{equation*}
	If $|y|<2$, then
	\[
	\left|x+\frac R2y-x_0\right|
	\le|x-x_0|+\frac R2|y|<2R,
	\]
	so $x+(R/2)y\in B_{2R}(x_0)$.  The chain rule and the
	identity $\cN(\rho z)=\cN(z)$ for $\rho>0$ show that
	\[
	\Delta_y\bv(y)
	=\frac4{R^2}\left(\frac R2\right)^2
	\Delta_x\bu\left(x+\frac R2y\right)
	=\cN\!\left(\bu\left(x+\frac R2y\right)\right)
	=\cN(\bv(y)),
	\]
	so $\bv$ is a weak solution of the same equation, and
	\[
	\|\bv\|_{L^\infty(B_2)}
	\le\frac4{R^2}\|\bu\|_{L^\infty(B_{2R}(x_0))}.
	\]
	The second derivatives satisfy
	\[
	D_y^2\bv(0)
	=\frac4{R^2}\left(\frac R2\right)^2D_x^2\bu(x)
	=D_x^2\bu(x).
	\]
	Proposition~\ref{prop:nonzero-center} therefore yields the following estimate
	at every point of $B_R(x_0)\cap\{\bu\ne0\}$:
	\begin{equation}\label{eq:nonzero-uniform}
		|D^2\bu(x)|
		\le C_n\left(1+R^{-2}
		\|\bu\|_{L^\infty(B_{2R}(x_0))}\right).
	\end{equation}
	At every such point the Hessian in \eqref{eq:nonzero-uniform} is classical.
	Indeed, by \eqref{eq:finite-p} and the Sobolev embedding, $\bu$ has a
	continuous representative.  Since $\bu(x)\ne0$, there exists $\rho>0$ such
	that
	\[
	|\bu(y)|\ge\frac12|\bu(x)|>0
	\qquad\text{for every }y\in B_\rho(x).
	\]  On this
	neighborhood, $\cN$ is smooth near the range of $\bu$, and the interior
	Schauder estimate implies $\bu\in C^{2,\alpha}$ locally.
	
	It remains to identify the Hessian on the zero set.  Fix measurable
	representatives of the weak derivatives.  Apply
	Lemma~\ref{lem:sobolev-level-set} first to each component $u_\alpha$.
	Outside a null set,
	\[
	D_i u_\alpha=0\quad\text{on }\{\bu=0\},
	\qquad \alpha=1,\ldots,m,\quad i=1,\ldots,n,
	\]
	because $\{\bu=0\}\subset\{u_\alpha=0\}$.  By
	\eqref{eq:finite-p}, $D_i u_\alpha\in W^{1,p}_{\mathrm{loc}}$.  Applying the
	same lemma to every $D_i u_\alpha$ shows, outside another null set, that
	\[
	D_jD_i u_\alpha=0\quad\text{on }\{D_i u_\alpha=0\}.
	\]
	There are only finitely many indices because $n$ and $m$ are finite.  After
	combining their exceptional null sets, we obtain
	\begin{equation}\label{eq:Hessian-zero-set}
		D^2\bu=0\quad\text{almost everywhere on }\{\bu=0\}.
	\end{equation}
	Combining \eqref{eq:nonzero-uniform} and
	\eqref{eq:Hessian-zero-set} proves \eqref{eq:main-estimate}.
\end{proof}

\begin{remark}
	The estimate on the nonzero set follows from the matrix coefficient bound,
	while the zero set estimate follows from the Sobolev level set theorem.
	Therefore the proof requires neither a classification of free boundary
	points nor any regularity, density, or measure assumption on the free
	boundary.
\end{remark}

{\textbf{Acknowledgements}}\quad
DU is supported by National Nature Science Foundation of China under Grants 12125102, 12526202, 12671245 and 12631009. 
TANG is supported by National Nature Science Foundation of China under Grant 124B2012, Postdoctoral Fellowship Program and China Postdoctoral Science Foundation (BX20250058), and China Postdoctoral Science Foundation (2025M783083). WANG is supported by National Nature Science Foundation of China under Grant 12301258 and Fundamental Research Funds for the Central Universities 2682025CX060.

{\bf Data availability}\quad The authors confirm that this manuscript has no associated data.

{\bf Declarations}

{\bf Conflict of interest}\quad On behalf of all authors, the corresponding author states that there is no conflict of interest.


\begin{thebibliography}{ASUW15}
	
	\bibitem[ACF84]{ACF84}
	H.~W. Alt, L.~A. Caffarelli, and A.~Friedman,
	\emph{Variational problems with two phases and their free boundaries},
	Trans. Amer. Math. Soc. \textbf{282} (1984), 431--461.
	
	\bibitem[ALS13]{ALS13}
	J.~Andersson, E.~Lindgren, and H.~Shahgholian,
	\emph{Optimal regularity for the no-sign obstacle problem},
	Comm. Pure Appl. Math. \textbf{66} (2013), 245--262.
	
	\bibitem[ASUW15]{ASUW15}
	J.~Andersson, H.~Shahgholian, N.~N.~Uraltseva, and G.~S.~Weiss,
	\emph{Equilibrium points of a singular cooperative system with free boundary},
	Adv. Math. \textbf{280} (2015), 743--771.
	
	\bibitem[ASW10]{ASW10}
	J.~Andersson, H.~Shahgholian, and G.~S.~Weiss,
	\emph{Uniform regularity close to cross singularities in an unstable free
		boundary problem},
	Comm. Math. Phys. \textbf{296} (2010), 251--270.
	
	\bibitem[Caf77]{Caf77}
	L.~A.~Caffarelli,
	\emph{The regularity of free boundaries in higher dimensions},
	Acta Math. \textbf{139} (1977), 155--184.
	
	\bibitem[Caf80]{Caf80}
	L.~A.~Caffarelli,
	\emph{Compactness methods in free boundary problems},
	Comm. Partial Differential Equations \textbf{5} (1980), 427--448.
	
	\bibitem[Caf98]{Caf98}
	L.~A.~Caffarelli,
	\emph{The obstacle problem revisited},
	J. Fourier Anal. Appl. \textbf{4} (1998), 383--402.
	
	\bibitem[CLW26]{CLW26}
	S.~Chen, Y.~Li, and X.~Wang,
	\emph{Uniqueness of blow-ups for the superconductivity free boundary problem},
	arXiv:2604.23682, 2026.
	
	\bibitem[CSV18]{CSV18}
	M.~Colombo, L.~Spolaor, and B.~Velichkov,
	\emph{A logarithmic epiperimetric inequality for the obstacle problem},
	Geom. Funct. Anal. \textbf{28} (2018), 1029--1061.
	
	\bibitem[Dan20]{Dan20}
	D.~Danielli,
	\emph{An overview of the obstacle problem},
	Notices Amer. Math. Soc. \textbf{67} (2020), 1487--1497.
	
	\bibitem[DJS22]{DJS22}
	D.~De~Silva, S.~Jeon, and H.~Shahgholian,
	\emph{Almost minimizers for a singular system with free boundary},
	J. Differential Equations \textbf{336} (2022), 167--203.
	
	\bibitem[DJS23]{DJS23}
	D.~De~Silva, S.~Jeon, and H.~Shahgholian,
	\emph{Almost minimizers for a sublinear system with free boundary},
	Calc. Var. Partial Differential Equations \textbf{62} (2023),
	Paper No.~149, 43~pp.
	
	\bibitem[DJS26]{DJS26}
	D.~De~Silva, S.~Jeon, and H.~Shahgholian,
	\emph{The free boundary for a superlinear system},
	J. Funct. Anal. \textbf{290} (2026), 39~pp.
	
	
	\bibitem[EG15]{EG15}
	L.~C.~Evans and R.~F.~Gariepy,
	\emph{Measure theory and fine properties of functions},
	revised ed., Textbooks in Mathematics,
	CRC Press, Boca Raton, FL, 2015.
	
	\bibitem[FGKS24]{FGKS24}
	A.~Figalli, A.~Guerra, S.~Kim, and H.~Shahgholian,
	\emph{Constraint maps: singularities vs free boundaries},
	arXiv:2407.21128, 2024.
	
	\bibitem[FGKS25]{FGKS25}
	A.~Figalli, A.~Guerra, S.~Kim, and H.~Shahgholian,
	\emph{Constraint maps and free boundaries},
	Notices Amer. Math. Soc. \textbf{72} (2025), 494--503.
	
	\bibitem[FGKS26]{FGKS26}
	A.~Figalli, A.~Guerra, S.~Kim, and H.~Shahgholian,
	\emph{Constraint maps: insights and related themes},
	La Matematica \textbf{5} (2026), 27~pp.
	
	\bibitem[Fig18]{Fig18}
	 A. Figalli, \emph{Free boundary regularity in obstacle problems}, Journ\'{e}es \'{E}quations aux d\'{e}riv\'{e}es partielles, (2018), 24 pp.
	
	\bibitem[FKS24]{FKS24}
	A.~Figalli, S.~Kim, and H.~Shahgholian,
	\emph{Constraint maps with free boundaries: the obstacle case},
	Arch. Ration. Mech. Anal. \textbf{248} (2024), 36~pp.
	
	\bibitem[FS19]{FS19}
	A.~Figalli and J.~Serra,
	\emph{On the fine structure of the free boundary for the classical obstacle
		problem},
	Invent. Math. \textbf{215} (2019), 311--366.
	
	\bibitem[FK24]{FK24}
	M.~Fotouhi and H.~Koch,
	\emph{Higher regularity of the free boundary in a semilinear system},
	Math. Ann. \textbf{388} (2024), 3897--3939.
	
	\bibitem[FSW21]{FSW21}
	M.~Fotouhi, H.~Shahgholian, and G.~S.~Weiss,
	\emph{A free boundary problem for an elliptic system},
	J. Differential Equations \textbf{284} (2021), 126--155.
	
	\bibitem[Fre72]{Fre72}
	J.~Frehse,
	\emph{On the regularity of the solution of a second order variational
		inequality},
	Boll. Un. Mat. Ital. (4) \textbf{6} (1972), 312--315.
	
	\bibitem[IMN17]{IMN17}
	E.~Indrei, A.~Minne, and L.~Nurbekyan,
	\emph{Regularity of solutions in semilinear elliptic theory},
	Bull. Math. Sci. \textbf{7} (2017), 177--200.
	
	\bibitem[KN77]{KN77}
	D.~Kinderlehrer and L.~Nirenberg,
	\emph{Regularity in free boundary problems},
	Ann. Scuola Norm. Sup. Pisa Cl. Sci. (4) \textbf{4} (1977), 373--391.
	
	\bibitem[Koi21]{Koi21}
	S.~Koike,
	\emph{Regularity of solutions of obstacle problems---old \& new},
	in \emph{Nonlinear Partial Differential Equations for Future Applications},
	Springer Proc. Math. Stat., vol.~346,
	Springer, Singapore, 2021, 205--243.
	
	
	\bibitem[Mon03]{Mon03}
	R.~Monneau,
	\emph{On the number of singularities for the obstacle problem in two
		dimensions},
	J. Geom. Anal. \textbf{13} (2003), 359--389.
	
	\bibitem[PSU12]{PSU12}
	A.~Petrosyan, H.~Shahgholian, and N.~Uraltseva,
	\emph{Regularity of free boundaries in obstacle-type problems},
	Graduate Studies in Mathematics, vol.~136,
	American Mathematical Society, Providence, RI, 2012.
	
	\bibitem[RS19]{RS19}
	X.~Ros-Oton and J.~Serra,
	\emph{Understanding singularities in free boundary problems},
	Matematica, Cultura e Societ\`a \textbf{4} (2019), 107--118.
	
	\bibitem[Sha03]{Sha03}
	H.~Shahgholian,
	\emph{$C^{1,1}$ regularity in semilinear elliptic problems},
	Comm. Pure Appl. Math. \textbf{56} (2003), 278--281.
	
	\bibitem[SUW04]{SUW04}
	H.~Shahgholian, N.~N.~Uraltseva, and G.~S.~Weiss,
	\emph{Global solutions of an obstacle-problem-like equation with two phases},
	Monatsh. Math. \textbf{142} (2004), 27--34.
	
	\bibitem[SUW07]{SUW07}
	H.~Shahgholian, N.~N.~Uraltseva, and G.~S.~Weiss,
	\emph{The two-phase membrane problem---regularity of the free boundaries in
		higher dimensions},
	Int. Math. Res. Not. (2007), 16~pp.
	
	\bibitem[Ura01]{Ura01}
	N.~N.~Uraltseva,
	\emph{Two-phase obstacle problem},
	J. Math. Sci. (New York) \textbf{106} (2001), 3073--3077.
	
	\bibitem[Wei99]{Wei99}
	G.~S.~Weiss,
	\emph{A homogeneity improvement approach to the obstacle problem},
	Invent. Math. \textbf{138} (1999), 23--50.
	
\end{thebibliography}
\end{document}